\newtheorem{lemma}{Lemma}[section]
\newtheorem{theorem}{Theorem}[section]
\newtheorem{corollary}{Corollary}[theorem]
\newtheorem{conjecture}{Conjecture}[section]
\theoremstyle{definition}
\newtheorem{definition}{Definition}[section]
\theoremstyle{remark}
\newtheorem{remark}{Remark}[section]
\newtheorem{example}{Example}[section]
\date{ }
\begin{document}

\title{Principal factors and lattice minima}
%\title{On the structure of the relative genus field \\ of cubic Kummer extensions}

\author{S. AOUISSI, A. AZIZI, M. C. ISMAILI, D. C. MAYER and M. TALBI}

%\dedication{Respectfully dedicated to Professor H. C. Williams}

\maketitle

\noindent
\begin{center}
Dedicated to H. C. Williams.
\end{center}
\bigskip

%-----------------------------------------------------------------------

\medskip
\noindent
\textbf{Abstract:}
Let $\mathit{k}=\mathbb{Q}(\sqrt[3]{d},\zeta_3)$, where $d>1$ is a cube-free positive integer,
$\mathit{k}_0=\mathbb{Q}(\zeta_3)$ be the cyclotomic field containing a primitive cube root of unity $\zeta_3$, 
and $G=\operatorname{Gal}(\mathit{k}/\mathit{k}_0)$. The possible prime factorizations of $d$ in our main result  \cite[Thm. 1.1]{AMI} give rise to new phenomena
concerning the chain $\Theta=(\theta_i)_{i\in\mathbb{Z}}$ of \textit{lattice minima} in the underlying pure cubic subfield $L=\mathbb{Q}(\sqrt[3]{d})$ of $\mathit{k}$.
The aims of the present work are
% 2
 to give criteria for the occurrence of generators of primitive ambiguous principal ideals
 $(\alpha)\in\mathcal{P}_{\mathit{k}}^G/\mathcal{P}_{\mathit{k}_0}$
 among the lattice minima $\Theta=(\theta_i)_{i\in\mathbb{Z}}$ of the underlying pure cubic field $L=\mathbb{Q}(\sqrt[3]{d})$,
% 3
 and to explain exceptional behavior of the chain $\Theta$ for certain radicands $d$
 with impact on determining the principal factorization type of $L$ and $\mathit{k}$ by means of Voronoi's algorithm.
 
\bigskip
\noindent{\bf Keywords:} {Pure cubic field,
$3$-rank, primitive ambiguous principal ideals, principal factorization type, chain of lattice minima, Voronoi's algorithm.}\\

\bigskip
\noindent{\bf Mathematics Subject Classification 2010:} {11R11, 11R16, 11R20, 11R27, 11R29, 11R37.}\bigskip

%%%%%%%%%%%%%%%%%%%%%%%%%%%%%%%%%%%%%%%%%%%%%%%%%%%%%%%%%%%%%%%%%%%%%%%%%%%%%%%%%%%%%%

\section{Introduction}
\label{s:Intro}
Let $\mathit{k}=\mathbb{Q}(\sqrt[3]{d},\zeta_3)$, where $d>1$ is a cube-free positive integer,
$\mathit{k}_0=\mathbb{Q}(\zeta_3)$, where $\zeta_3$ is a primitive cube root of unity,
and $\mathit{k}^{*}$ be the relative genus field of $\mathit{k}/\mathit{k}_0$.
  \paragraph{}
   In our previous work \cite{AMI}, we implemented Gerth's methods \cite{Ge1976} and \cite{Ge1975}
for determining the rank of the group of ambiguous ideal classes of $\mathit{k}/\mathit{k}_0$ and
obtained all integers $d$ and conductors $f$ for which $\operatorname{Gal}(\mathit{k}^{*}/\mathit{k})\cong\mathbb{Z}/3\mathbb{Z}\times\mathbb{Z}/3\mathbb{Z}$.
In contrast with the radicands $d$ of the shape in \cite[Thm. 1.1]{AMITA},
the possible prime factorizations of $d$ in our main result  \cite[Thm. 1.1]{AMI}
are more complicated and give rise to new phenomena
concerning the chain $\Theta=(\theta_i)_{i\in\mathbb{Z}}$ of \textit{lattice minima}
\cite{HCW1980}, \cite{HCW1981}, \cite{HCW1982}
in the underlying pure cubic subfield $L=\mathbb{Q}(\sqrt[3]{d})$ of $\mathit{k}$.
A lattice minimum $\theta_i$ is an algebraic integer
with norm not exceeding the Minkowski bound of the maximal order $\mathcal{O}_{L}$ of $L$.
In particular, all positive units $\eta>0$ of $L$, which have norm $1$, are lattice minima
and the original purpose of Voronoi's algorithm \cite{GFV} was to find the \textit{fundamental unit} $0<\varepsilon<1$
by constructing the chain $\Theta$ and stopping at the first unit encountered, which must be $\varepsilon$.
\paragraph{}
More recently, however, it was the idea of Barrucand, Cohn \cite{BC1971} and Williams \cite{HCW1982}
to use Voronoi's algorithm for the classification of pure cubic fields into \textit{three principal factorization types},
which we have rederived with cohomological techniques in \cite[\S\ 2.1]{AMITA}.
The clue was to keep track of the norms $n_i=N_{L/\mathbb{Q}}(\theta_i)$ of all lattice minima
on the way through the chain $\Theta$, starting at the trivial unit $\theta_0=1$.
When some $n_i$ divides the square of the \textit{conductor} $f$ of $\mathit{k}/\mathit{k}_0$,
then $\theta_i$ is generator of a primitive ambiguous principal ideal in
$\mathcal{P}_{L}^G/\mathcal{P}_{\mathbb{Q}}\le\mathcal{P}_{\mathit{k}}^G/\mathcal{P}_{\mathit{k}_0}$,
more precisely an \textit{absolute principal factor},
and $L$ is of type $\beta$ \cite[Thm. 2.1]{AMITA}.
Now, the new phenomenon which arises for numerous radicands of the form in Equation (1) of \cite[Thm. 1.1]{AMI}
is the occasional failure of the chain $\Theta$ to lead to an absolute principal factor
although $L$ is of type $\beta$.
\paragraph{}
After explaining the connection between radicand $d$, conductor $f$, and ramification in $\mathit{k}/\mathit{k}_0$
in section \ref{ss:Ramification}, the formalism of canonical divisors in section \ref{ss:Canonical},
and the concept of lattice minima in section \ref{ss:LatticeMinima},
we prove necessary and sufficient conditions
for the occurrence of generators $\alpha$ of primitive ambiguous principal ideals
$(\alpha)\in\mathcal{P}_{L}^G/\mathcal{P}_{\mathbb{Q}}$
among the lattice minima in the chain $\Theta=(\theta_i)_{i\in\mathbb{Z}}$
in section \ref{ss:PrincipalFactorMinima}.
We develop a powerful new algorithm which elegantly avoids all mentioned problems
by using a non-maximal order $\mathcal{O}_{L,0}$ for $d\equiv\pm 1\,(\mathrm{mod}\,9)$,
and by exploiting the impossibility of type $\gamma$ \cite[Thm. 2.1]{AMITA} for $d\equiv\pm 2,\pm 4\,(\mathrm{mod}\,9)$,
in section \ref{ss:Algorithm}, and we give an explicit criteria for M0-fields in rational integers in section \ref{ss:M0Fields}.
\paragraph{}
The new techniques were implemented for an extensive classification of all normalized radicands $2\le d<10^6$
and they detected serious defects in the previous table \cite[\S\ 6, p. 272, and Tbl. 2, p. 273]{HCW1982}. The usual notations  is given as follows:
\begin{itemize}
 \item $L= \mathbb{Q}(\sqrt[3]{d})$ is a pure cubic field, where $d>1$ is a cube-free positive integer;
 \item $\mathit{k}_0= \mathbb{Q}(\zeta_3)$, where $\zeta_3=e^{2i\pi/3}$ denotes a primitive third root of unity;
 \item $\mathit{k}=\mathbb{Q}(\sqrt[3]{d},\zeta_3)$ is the normal closure of $L$;
   \item $f$ is the conductor of the relative Kummer extension $\mathit{k}/\mathit{k}_0$; 
   \item $v_{l}(x)$ is the $l$-valuation of the integer $x$;
   \item $\Theta=(\theta_i)_{i\in\mathbb{Z}}$ is the chain of lattice minima in the underlying pure cubic subfield $L$ of $\mathit{k}$.
   \item $Q$ is the index of the subgroup $E_0$ generated by the units of intermediate fields of the extension $\mathrm{k}/\mathbb{Q}$ in the group of units of $\mathrm{k}$;
    \item $\langle\tau\rangle=\operatorname{Gal}(\mathit{k}/L)$ such that $\tau^2=id$, $\tau(\zeta_3)=\zeta_3^2$ and $\tau(\sqrt[3]{d})=\sqrt[3]{d}$;
 \item $\langle\sigma\rangle=\operatorname{Gal}(\mathit{k}/\mathit{k}_0)$ such that $\sigma^3=id$, $\sigma(\zeta_3)=\zeta_3$,
   $\sigma(\sqrt[3]{d})=\zeta_3\sqrt[3]{d}$ and $\tau\sigma=\sigma^2\tau$;
 \item For an algebraic number field $F$:
  \begin{itemize}
  \item $\mathcal{O}_{F}$, $E_{F}$ : the ring of integers and the group of units of $F$;
  \item $\mathcal{I}_{F}$, $\mathcal{P}_{F}$ : the group of ideals and the subgroup of principal ideals of $F$;
  \end{itemize}
\end{itemize}

%%%%%%%%%%%%%%%%%%%%%%%%%%%%%%%%%%%

%\newpage

%\section{Theory related to the main result}
%\label{s:Context}

%In the present section, we give an overview of the special properties of pure cubic fields $L=\mathbb{Q}(\sqrt[3]{d})$ possessing the radicands $d$ of the shapes listed in equation (1) of the previous paper \cite[Thm. 1.1]{AMI}.

%-----------------------------------------------------------------------

\section{Conductor and ramification}
\label{ss:Ramification}

%\noindent
Let $L=\mathbb{Q}(\sqrt[3]{d})$ be a pure cubic field
with \textit{normalized radicand} $d=a\cdot b^2$,
where $a>b\ge 1$ are square-free coprime integers.
The normalization enforces that the \textit{co-radicand} $\bar{d}=a^2\cdot b$
is strictly bigger than $d$.
It generates an isomorphic field $\mathbb{Q}(\sqrt[3]{\bar{d}})\simeq L$,
since $a^2\cdot b$ differs from the square $a^2\cdot b^4$ of $d$ by the complete third power $b^3$.

The class field theoretic \textit{conductor} $f$
of the associated relative Kummer extension $\mathit{k}/\mathit{k}_0$ is 
\begin{equation}
\label{eqn:Conductor}
f =
\begin{cases}
3ab & \text{ if } d\not\equiv\pm 1\,(\mathrm{mod}\,9) \text{ (Dedekind's species $1$)}, \\
 ab & \text{ if } d\equiv\pm 1\,(\mathrm{mod}\,9) \text{ (Dedekind's species $2$)}. \\
\end{cases}
\end{equation}
This means that all prime divisors of $ab$ are ramified in $\mathit{k}/\mathit{k}_0$.
If $L$ is of Dedekind's second species with $d\equiv\pm 1\,(\mathrm{mod}\,9)$,
then $3\nmid ab$ and $3$ is unramified in $\mathit{k}/\mathit{k}_0$.
However, if $L$ is of Dedekind's first species with $d\not\equiv\pm 1\,(\mathrm{mod}\,9)$,
then either $3\mid ab$ (species $1\mathrm{a}$) or $d\equiv\pm 2,\pm4\,(\mathrm{mod}\,9)$ (species $1\mathrm{b}$),
and in both cases $3$ is ramified in $\mathit{k}/\mathit{k}_0$
\cite[\S\ 2.2]{AMITA}.

For a prime number $\ell\in\mathbb{P}$,
we denote by $v_{\ell}:\,\mathbb{Q}\setminus\lbrace 0\rbrace\to\mathbb{Z}$
the $\ell$-valuation of non-zero rational numbers.

The species of the field $L$ can be expressed by the $3$-valuation of the conductor $f$:
\begin{equation}
\label{eqn:3Valuation}
v_3(f)=
\begin{cases}
2, \\
1, \\
0,
\end{cases}
\text{ if } L \text{ is of species }
\begin{cases}
1\mathrm{a}, \\
1\mathrm{b}, \\
2.
\end{cases}
\end{equation}
Since the conductor is divisible by $9$ for fields of species $1\mathrm{a}$,
it is convenient to define a \textit{ramification invariant} $R$
which is the product of all primes which are ramified in $\mathit{k}/\mathit{k}_0$:
\begin{equation}
\label{eqn:Ramification}
R:=
\begin{cases}
f = ab  & \text{ if } d\equiv\pm 1\,(\mathrm{mod}\,9) \text{ (and thus } 3\nmid ab), \\
f = 3ab & \text{ if } d\equiv\pm 2,\pm 4\,(\mathrm{mod}\,9) \text{ (and thus } 3\nmid ab), \\
f/3 = ab & \text{ if } 3\mid ab.
\end{cases}
\end{equation}

%-----------------------------------------------------------------------

\section{Formalism of canonical divisors}
\label{ss:Canonical}

%\noindent
For the investigation of \textit{principal factors},
that is, generators $\alpha\in\mathcal{O}_L$ of primitive ambiguous principal ideals
$(\alpha)=\alpha\mathcal{O}_L\in\mathcal{P}_L^G/\mathcal{P}_{\mathbb{Q}}$,
which have divisors of the square $R^2$ of the ramification invariant $R$ as norms,
$n=\lvert N_{L/\mathbb{Q}}(\alpha)\rvert$ with $n\mid R^2$,
it is useful to introduce the formalism of \textit{canonical divisors}
of the radicand $d=ab^2$ with respect to the norm $n$
\cite[\S\ 7, p. 18]{BC1970}:
\begin{equation}
\label{eqn:Canonical}
\begin{aligned}
d_1 &:= \prod\lbrace\ell\in\mathbb{P}\mid v_{\ell}(a)=1,\ v_{\ell}(n)=1\rbrace, \quad
d_2 := \prod\lbrace\ell\in\mathbb{P}\mid v_{\ell}(a)=1,\ v_{\ell}(n)=2\rbrace, \\
d_4 &:= \prod\lbrace\ell\in\mathbb{P}\mid v_{\ell}(b)=1,\ v_{\ell}(n)=1\rbrace, \quad
d_5 := \prod\lbrace\ell\in\mathbb{P}\mid v_{\ell}(b)=1,\ v_{\ell}(n)=2\rbrace,
\end{aligned}
\end{equation}
and two additional \textit{silent} divisors for expressing the radicand and its components, 
\begin{equation}
\label{eqn:Silent}
\begin{aligned}
d_3 &:= \prod\lbrace\ell\in\mathbb{P}\mid v_{\ell}(a)=1,\ v_{\ell}(n)=0\rbrace, \quad
d_6 := \prod\lbrace\ell\in\mathbb{P}\mid v_{\ell}(b)=1,\ v_{\ell}(n)=0\rbrace.
\end{aligned}
\end{equation}
Then the norm $n$, the radicands $d,\bar{d}$, and their components $a,b$ have the following shape:
\begin{equation}
\label{eqn:Expressions}
n=d_1d_2^2d_4d_5^2,\ a=d_1d_2d_3,\ b=d_4d_5d_6,\ d=d_1d_2d_3d_4^2d_5^2d_6^2,\ \bar{d}=d_1^2d_2^2d_3^2d_4d_5d_6.
\end{equation}

%-----------------------------------------------------------------------

\section{Lattice minima with principal factor norm}
\label{ss:LatticeMinima}

\noindent
We assume that $\sqrt[3]{d}$ denotes the unique \textit{real} zero of the pure equation $X^3-d=0$
and therefore the pure cubic field $L=\mathbb{Q}(\sqrt[3]{d})$ is a \textit{real} field with two complex conjugates,
$L^{\prime}=L^{\sigma}=\mathbb{Q}(\zeta_3\sqrt[3]{d})$ and $L^{\prime\prime}=L^{\sigma^2}=\mathbb{Q}(\zeta_3^2\sqrt[3]{d})$,
that is, with signature $(1,1)$ and torsion-free Dirichlet unit rank $1+1-1=1$.
Thus the total order of the field $\mathbb{R}$ of real numbers restricts to $L$,
which we shall need for investigating lattice minima.
We point out that the second \textit{algebraic} conjugate $\alpha^{\prime\prime}\in L^{\prime\prime}$ of an element $\alpha\in L$
is exactly the \textit{complex} conjugate of the first (algebraic) conjugate $\alpha^{\prime}\in L^{\prime}$ of $\alpha$,
since $L=\mathrm{Fix}(\tau)$, $\tau\sigma=\sigma^2\tau$, and thus
$\alpha^{\prime\prime}=\alpha^{\sigma^2}=(\alpha^{\tau})^{\sigma^2}=\alpha^{\sigma^2\tau}=\alpha^{\tau\sigma}=(\alpha^{\sigma})^{\tau}=(\alpha^{\prime})^{\tau}$
where $\tau$ with $\tau(\zeta_3)=\zeta_3^2=\bar{\zeta_3}$ is the complex conjugation restricted to $\mathit{k}$.

The Minkowski mapping 
$\psi:\,\mathcal{O}_{L}\to\mathbb{R}^3$, $\alpha\mapsto (\mathrm{Re}(\alpha^{\prime}),\mathrm{Im}(\alpha^{\prime}),\alpha)$
is an injective embedding of the maximal order $\mathcal{O}_{L}$ into Euclidean $3$-space $\mathbb{R}^3$.
The number geometric image $\psi(\mathcal{O}_{L})$ is a discrete free $\mathbb{Z}$-module of rank three, i.e., a complete lattice in $\mathbb{R}^3$.

%-----------------------------------------------------------------------

\begin{definition}
\label{dfn:Minima}
The \textit{norm cylinder} of a point $\mathbf{x}=(x,y,z)$ in Euclidean $3$-space is defined by
\begin{equation}
\label{eqn:NormCylinder}
\mathcal{N}(\mathbf{x}):=\lbrace\mathbf{u}=(u,v,w)\in\mathbb{R}^3\mid 0\le u^2+v^2<x^2+y^2,\ 0\le w<\lvert z\rvert\rbrace.
\end{equation}
If $\mathcal{O}\subseteq\mathcal{O}_{L}$ is an order of the field $L$, not necessarily the maximal order,
then an algebraic integer $\alpha\in\mathcal{O}$ with $\alpha>0$ is called a \textit{lattice minimum} of $\mathcal{O}$ if 
\begin{equation}
\label{eqn:Minimum}
\mathcal{N}(\psi(\alpha))\bigcap\psi(\mathcal{O})=\lbrace\mathbf{O}\rbrace,
\text{ where } \mathbf{O}=(0,0,0) \text{ denotes the origin of } \mathbb{R}^3,
\end{equation}
or, equivalently, observing that
$\mathrm{Re}(\alpha^{\prime})^2+\mathrm{Im}(\alpha^{\prime})^2=\lvert\alpha^{\prime}\rvert^2=\alpha^{\prime}(\alpha^{\prime})^{\tau}=\alpha^{\prime}\alpha^{\prime\prime}$,
if
\begin{equation}
\label{eqn:MinimumVar}
(\forall\,\beta\in\mathcal{O})\,\Bigl(0\le\beta^{\prime}\beta^{\prime\prime}<\alpha^{\prime}\alpha^{\prime\prime},\ 0\le\beta<\alpha\Longrightarrow\beta=0\Bigr)
\end{equation}
\end{definition}

\noindent
Note that the volume of the cylinder is given by
$\mathrm{vol}_3(\mathcal{N}(\psi(\alpha)))=\pi\cdot\alpha^{\prime}\alpha^{\prime\prime}\cdot\alpha=\pi\cdot N_{L/\mathbb{Q}}(\alpha)$,
which justifies the designation \textit{norm cylinder}.

The set of all lattice minima of $\mathcal{O}$ is denoted by $\mathrm{Min}(\mathcal{O})$.

%-----------------------------------------------------------------------

\begin{lemma}
\label{lem:Units}
All positive units in $E_{L}^+:=\lbrace\eta\in E_{L}\mid\eta>0\rbrace$ are lattice minima of $\mathcal{O}_{L}$,
but the radical $\delta:=\sqrt[3]{d}$ and the co-radical $\bar{\delta}:=\sqrt[3]{\bar{d}}$
never belong to $\mathrm{Min}(\mathcal{O}_{L})$.
More generally, if $\alpha\in\mathrm{Min}(\mathcal{O}_{L})$
then $\alpha\delta,\alpha\bar{\delta}\not\in\mathrm{Min}(\mathcal{O}_{L})$.
\end{lemma}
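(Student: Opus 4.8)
The plan is to argue entirely through the equivalent characterization \eqref{eqn:MinimumVar} of a lattice minimum, using only two elementary facts: that $\beta'\beta''=\lvert\beta'\rvert^2\ge 0$ for every $\beta\in L$ (which is recorded in Definition~\ref{dfn:Minima}), and that $\lvert N_{L/\mathbb{Q}}(\beta)\rvert\ge 1$ for every nonzero $\beta\in\mathcal{O}_L$, since the norm of a nonzero algebraic integer is a nonzero rational integer.

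First I would treat the positive units. For $\eta\in E_L^{+}$ the norm $N_{L/\mathbb{Q}}(\eta)=\eta\cdot\eta'\eta''=\eta\lvert\eta'\rvert^2$ is simultaneously $\pm 1$ and nonnegative, hence equal to $+1$ (this sign bookkeeping is exactly where the $(1,1)$ signature enters). Now suppose $\beta\in\mathcal{O}_L$ satisfies the premise of \eqref{eqn:MinimumVar}, namely $0\le\beta'\beta''<\eta'\eta''$ and $0\le\beta<\eta$. As $\eta>0$ and $\eta'\eta''=\lvert\eta'\rvert^2>0$, multiplying these inequalities between nonnegative reals yields $0\le N_{L/\mathbb{Q}}(\beta)=\beta\cdot\beta'\beta''<\eta\cdot\eta'\eta''=1$. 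If $\beta\ne 0$ this contradicts $\lvert N_{L/\mathbb{Q}}(\beta)\rvert\ge 1$, so $\beta=0$ and therefore $\eta\in\mathrm{Min}(\mathcal{O}_L)$.

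Next I would prove the general multiplicative statement, from which the assertions on $\delta$ and $\bar\delta$ drop out as the special case $\alpha=1$. The crucial observation is purely metric: since $d>1$ we have $\delta=d^{1/3}>1$ and $\delta'\delta''=\lvert\zeta_3\sqrt[3]{d}\rvert^2=d^{2/3}>1$, and since $\bar d>d>1$ we likewise have $\bar\delta=\bar d^{1/3}>1$ and $\bar\delta'\bar\delta''=\bar d^{2/3}>1$. Let $\alpha\in\mathrm{Min}(\mathcal{O}_L)$, so $\alpha>0$ and $\alpha'\alpha''=\lvert\alpha'\rvert^2>0$. Testing $\beta=\alpha$ against the candidate $\alpha\delta$, multiplicativity of conjugation gives $0<\alpha<\alpha\,d^{1/3}=\alpha\delta$ and $0<\alpha'\alpha''<\alpha'\alpha''\,d^{2/3}=(\alpha\delta)'(\alpha\delta)''$, so the nonzero lattice point $\psi(\alpha)$ lies strictly inside the norm cylinder $\mathcal{N}(\psi(\alpha\delta))$, i.e. $\psi(\alpha)\in\bigl(\mathcal{N}(\psi(\alpha\delta))\cap\psi(\mathcal{O}_L)\bigr)\setminus\{\mathbf{O}\}$. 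Hence $\alpha\delta\notin\mathrm{Min}(\mathcal{O}_L)$, and the identical computation with $\bar\delta$ in place of $\delta$ gives $\alpha\bar\delta\notin\mathrm{Min}(\mathcal{O}_L)$. Taking $\alpha=1$, which is a positive unit and thus a minimum by the first part, yields $\delta=1\cdot\delta\notin\mathrm{Min}(\mathcal{O}_L)$ and $\bar\delta\notin\mathrm{Min}(\mathcal{O}_L)$.

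I do not anticipate a genuine obstacle: the argument is elementary once \eqref{eqn:MinimumVar} is available. The only points demanding care are the sign conditions imposed by the $(1,1)$ signature — pinning down $N_{L/\mathbb{Q}}(\eta)=+1$ rather than $-1$, and ensuring $\alpha'\alpha''>0$ for nonzero $\alpha$ so that multiplication by $\delta$ or $\bar\delta$ strictly (not merely weakly) inflates the cylinder in both the radial and the vertical direction. A further routine verification is that $\bar\delta=\delta^2/b$ actually lies in $\mathcal{O}_L$, which follows because it is a root of the monic integral polynomial $X^3-a^2b$.
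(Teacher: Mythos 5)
Your proposal is correct and follows essentially the same route as the paper: the unit case via the integrality of $N_{L/\mathbb{Q}}(\beta)$ forcing $\beta=0$ inside the norm cylinder, and the multiplicative case by exhibiting $\psi(\alpha)$ as a nonzero lattice point in $\mathcal{N}(\psi(\alpha\delta))$ using $\delta>1$ and $\delta'\delta''=\delta^2>1$. The only cosmetic differences are that you deduce the $\delta,\bar{\delta}$ statement from the general one with $\alpha=1$ (the paper notes instead that the general claim holds for any positive $\alpha\in\mathcal{O}_L$), and your added checks on signs and on $\bar{\delta}\in\mathcal{O}_L$ are harmless extras.
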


\begin{proof}
Let $\eta>0$ be a positive unit in $E_{L}=\langle -1,\varepsilon\rangle$,
where $\varepsilon>1$ denotes the fundamental unit of $L$.
For an algebraic integer $\alpha\in\mathcal{O}_{L}$ with $\psi(\alpha)\in\mathcal{N}(\psi(\eta))$,
we have $0\le\alpha^{\prime}\alpha^{\prime\prime}<\eta^{\prime}\eta^{\prime\prime}$ and $0\le\alpha<\eta$
and thus $0\le N_{L/\mathbb{Q}}(\alpha)<N_{L/\mathbb{Q}}(\eta)=1$.
Since $N_{L/\mathbb{Q}}(\alpha)\in\mathbb{Z}$ is an integer, this is only possible for $\alpha=0$.
Thus we have $\eta\in\mathrm{Min}(\mathcal{O}_{L})$.
In particular, the fundamental unit $\varepsilon$ and the trivial unit $1$ with $\psi(1)=(1,0,1)$
are lattice minima of $\mathcal{O}_{L}$.

Concerning the second claim,
which is also valid for any algebraic integer $\alpha\in\mathcal{O}_{L}$ with $\alpha>0$
(not necessarily $\alpha\in\mathrm{Min}(\mathcal{O}_{L})$),
we firstly observe that
$\delta,\bar{\delta}\ge\sqrt[3]{2}\approx 1.26>1$ since $d,\bar{d}\ge 2$,
furthermore
$N_{L/\mathbb{Q}}(\delta)=\delta\delta^{\prime}\delta^{\prime\prime}=\delta\cdot\zeta_3\delta\cdot\zeta_3^2\delta=\zeta_3^3\cdot\delta^3=d$
and thus
$\delta^{\prime}\delta^{\prime\prime}=d/\delta=\delta^2\ge\sqrt[3]{4}\approx 1.59>1$.
Consequently
$(\alpha\delta)^{\prime}(\alpha\delta)^{\prime\prime}=\alpha^{\prime}\alpha^{\prime\prime}\cdot\delta^{\prime}\delta^{\prime\prime}>\alpha^{\prime}\alpha^{\prime\prime}$
and $\alpha\delta>\alpha$, which means that $\mathbf{O}\neq\psi(\alpha)\in\mathcal{N}(\psi(\alpha\delta))$ and therefore 
$\alpha\delta\not\in\mathrm{Min}(\mathcal{O}_{L})$.
Similarly, the proof for $\alpha\bar{\delta}$.
\end{proof}

\newpage

%--------------------------------------------------------------------------------

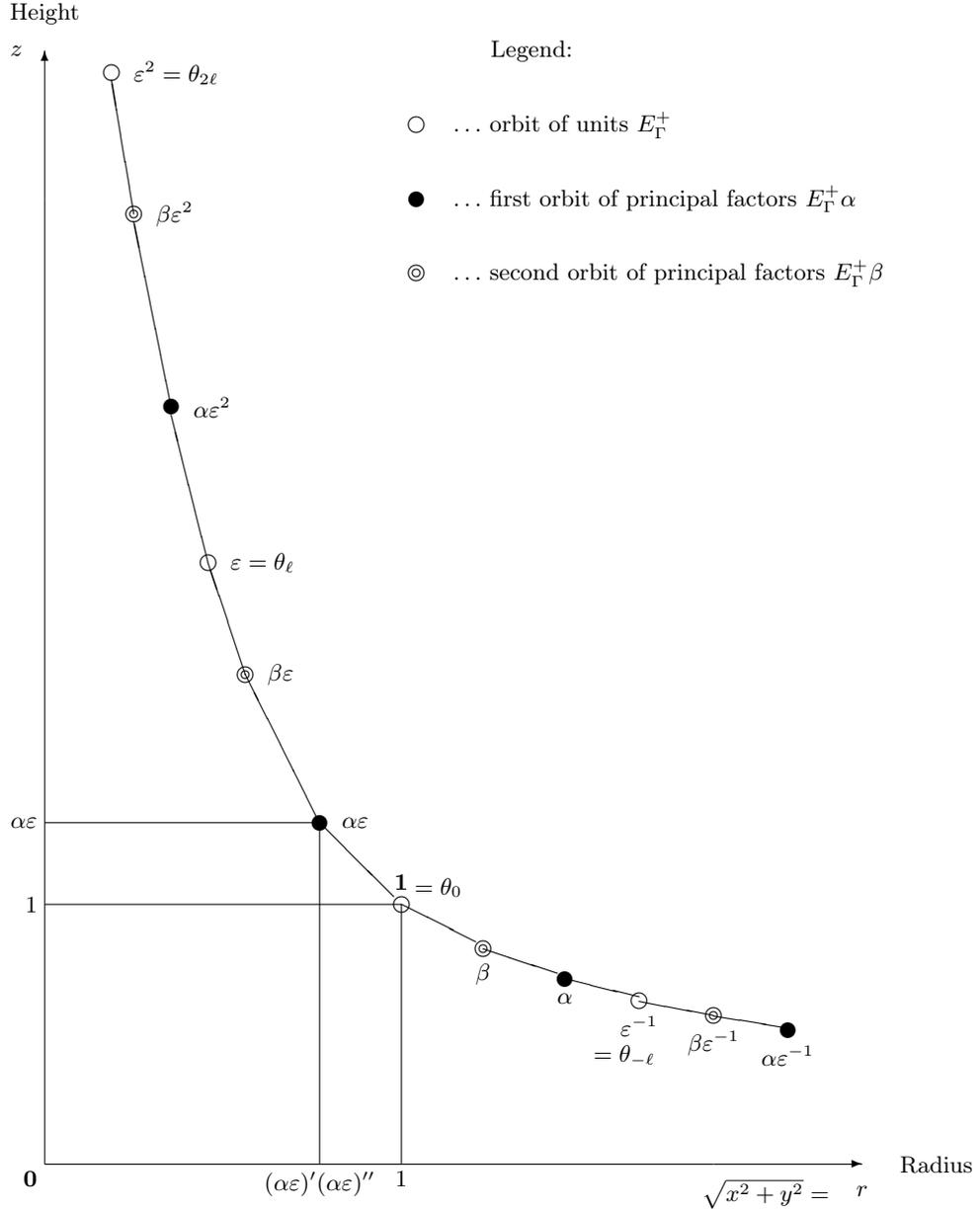
\begin{figure}[ht]
\caption{Chain $\Theta=(\theta_j)_{j\in\mathbb{Z}}$ of lattice minima in Minkowski signature space}
\label{fig:LatticeMinima}

{\small

\setlength{\unitlength}{1.0cm}
\begin{picture}(15,16.5)(-12,-11.5)

%***********************************************

% coordinate axes in Minkowski signature space

% height on the z-axis

\put(-10,4.5){\makebox(0,0)[cc]{Height}}
\put(-10.3,4){\makebox(0,0)[rc]{\(z\)}}
\put(-10,2){\vector(0,1){2}}
\put(-10,2){\line(0,-1){13}}

% origin
\put(-10.1,-11.1){\makebox(0,0)[rt]{\(\mathbf{0}\)}}

% radius on the r-axis

\put(-10,-11){\line(1,0){9}}
\put(-1,-11){\vector(1,0){2}}
\put(-0.3,-11.2){\makebox(0,0)[ct]{\(\sqrt{x^2+y^2}=\)}}
\put(1,-11.3){\makebox(0,0)[ct]{\(r\)}}
\put(2,-11){\makebox(0,0)[cc]{Radius}}

%***********************************************

% chain

\put(-9.1,3.7){\circle{0.2}}
\put(-8.8,3.7){\makebox(0,0)[lc]{\(\varepsilon^2=\theta_{2\ell}\)}}

\put(-8.8,1.8){\line(-1,6){0.3}}
\put(-8.8,1.8){\circle{0.1}}
\put(-8.8,1.8){\circle{0.2}}
\put(-8.5,1.8){\makebox(0,0)[lc]{\(\beta\varepsilon^2\)}}

\put(-8.3,-0.8){\line(-1,5){0.5}}
\put(-8.3,-0.8){\circle*{0.2}}
\put(-8,-0.8){\makebox(0,0)[lc]{\(\alpha\varepsilon^2\)}}

\put(-7.8,-2.9){\line(-1,4){0.5}}
\put(-7.8,-2.9){\circle{0.2}}
\put(-7.5,-2.9){\makebox(0,0)[lc]{\(\varepsilon=\theta_{\ell}\)}}

\put(-7.3,-4.4){\line(-1,3){0.5}}
\put(-7.3,-4.4){\circle{0.1}}
\put(-7.3,-4.4){\circle{0.2}}
\put(-7,-4.4){\makebox(0,0)[lc]{\(\beta\varepsilon\)}}

\put(-6.3,-6.4){\line(-1,2){1}}
\put(-6.3,-6.4){\circle*{0.2}}
\put(-6,-6.4){\makebox(0,0)[lc]{\(\alpha\varepsilon\)}}

%-----------------------------------------------

\put(-6.3,-6.4){\line(-1,0){3.7}}
\put(-10.1,-6.4){\makebox(0,0)[rc]{\(\alpha\varepsilon\)}}
\put(-6.3,-6.4){\line(0,-1){4.6}}
\put(-6.3,-11.1){\makebox(0,0)[ct]{\((\alpha\varepsilon)^{\prime}(\alpha\varepsilon)^{\prime\prime}\)}}

\put(-6.3,-6.4){\line(1,-1){1}}

\put(-5.2,-7.5){\line(-1,0){4.8}}
\put(-10.1,-7.5){\makebox(0,0)[rc]{\(1\)}}
\put(-5.2,-7.5){\line(0,-1){3.5}}
\put(-5.2,-11.1){\makebox(0,0)[ct]{\(1\)}}

%-----------------------------------------------

\put(-5.2,-7.3){\makebox(0,0)[cb]{\(\mathbf{1}\)}}
\put(-4.7,-7.4){\makebox(0,0)[cb]{\(=\theta_{0}\)}}
\put(-5.2,-7.5){\circle{0.2}}
\put(-5.2,-7.5){\line(2,-1){1}}

\put(-4.1,-8.3){\makebox(0,0)[ct]{\(\beta\)}}
\put(-4.1,-8.1){\circle{0.1}}
\put(-4.1,-8.1){\circle{0.2}}
\put(-4.1,-8.1){\line(3,-1){1}}

\put(-3,-8.7){\makebox(0,0)[ct]{\(\alpha\)}}
\put(-3,-8.5){\circle*{0.2}}
\put(-3,-8.5){\line(4,-1){1}}

\put(-2,-9){\makebox(0,0)[ct]{\(\varepsilon^{-1}\)}}
\put(-2.2,-9.4){\makebox(0,0)[ct]{\(=\theta_{-\ell}\)}}
\put(-2,-8.8){\circle{0.2}}
\put(-2,-8.8){\line(5,-1){1}}

\put(-1,-9.2){\makebox(0,0)[ct]{\(\beta\varepsilon^{-1}\)}}
\put(-1,-9){\circle{0.1}}
\put(-1,-9){\circle{0.2}}
\put(-1,-9){\line(6,-1){1}}

\put(0,-9.4){\makebox(0,0)[ct]{\(\alpha\varepsilon^{-1}\)}}
\put(0,-9.2){\circle*{0.2}}

%***********************************************

% legend

\put(-4,4){\makebox(0,0)[lc]{Legend:}}
\put(-5,3){\circle{0.2}}
\put(-4.5,3){\makebox(0,0)[lc]{\(\ldots\) orbit of units \(E_{\Gamma}^+\)}}
\put(-5,2){\circle*{0.2}}
\put(-4.5,2){\makebox(0,0)[lc]{\(\ldots\) first orbit of principal factors \(E_{\Gamma}^+\alpha\)}}
\put(-5,1){\circle{0.1}}
\put(-5,1){\circle{0.2}}
\put(-4.5,1){\makebox(0,0)[lc]{\(\ldots\) second orbit of principal factors \(E_{\Gamma}^+\beta\)}}

%***********************************************

\end{picture}

}

\end{figure}

%\newpage

%-----------------------------------------------------------------------

\begin{definition}
\label{dfn:M0Fields}
A pure cubic field $L=\mathbb{Q}(\sqrt[3]{d})$ of principal factorization type $\beta$ is called an
\begin{itemize}
\item
$\mathrm{M}2$-\textit{field} if \quad
$\mathrm{Min}(\mathcal{O}_{L})\bigcap\Delta_{L/\mathbb{Q}}=E_{L}^+ \quad \dot{\bigcup} \quad E_{L}^+\alpha \quad \dot{\bigcup} \quad E_{L}^+\beta$,
\item
$\mathrm{M}1$-\textit{field} if \quad $\mathrm{Min}(\mathcal{O}_{L})\bigcap\Delta_{L/\mathbb{Q}}=E_{L}^+ \quad \dot{\bigcup} \quad E_{L}^+\alpha$ \quad
or \quad $E_{L}^+ \quad \dot{\bigcup} \quad E_{L}^+\beta$,
\item
$\mathrm{M}0$-\textit{field} if \quad $\mathrm{Min}(\mathcal{O}_{L})\bigcap\Delta_{L/\mathbb{Q}}=E_{L}^+$.
\qquad Here, $\beta$ denotes one of $\bar{\alpha}$, $\frac{\bar{\alpha}\delta}{d_1d_4d_5}$, $\frac{\bar{\alpha}\bar{\delta}}{d_1d_2d_4}$.
\end{itemize}
\end{definition}

\noindent
In Definition \ref{dfn:M0Fields},
which presents the mysterious $\mathrm{M}0$-\textit{fields}
as the central objects of our subsequent investigations,
because of their unpleasant impact on the classification problem
and corresponding serious defects in tables of cubic fields \cite{HCW1982},
we use the isomorphism
\begin{equation}
\label{eqn:Generators}
\mathcal{P}_{L}^G/\mathcal{P}_{\mathbb{Q}}\simeq\Delta_{L/\mathbb{Q}}/(E_{L}\cdot\mathbb{Q}^{\times}),
\end{equation}
induced by the principal ideal mapping $\iota:\,L^{\times}\to\mathcal{P}_{LL}$, $\alpha\mapsto (\alpha)=\alpha\mathcal{O}_{L}$,
with inverse image $\Delta_{L/\mathbb{Q}}:=\iota^{-1}(\mathcal{P}_{L}^G)$,
and we assume that the integral part $\Delta_{L/\mathbb{Q}}\cap\mathcal{O}_{L}$,
which always contains the radical group $\Delta:=\lbrace 1,\delta,\bar{\delta}\rbrace$,
is generated by the trivial principal factor $\delta$
and an additional non-trivial principal factor $\alpha$.
For the same reason as for
replacing the non-primitive square $\delta^2=\sqrt[3]{a^2b^4}=b\cdot\sqrt[3]{a^2b}=b\cdot\bar{\delta}$ by $\bar{\delta}:=\frac{\delta^2}{d_4d_5d_6}$
we also replace $\alpha^2$ by $\bar{\alpha}:=\frac{\alpha^2}{d_2d_5}$, as explained below by means of the canonical divisors.
Then we have
\begin{equation}
\label{eqn:Cosets}
\Delta_{L/\mathbb{Q}}\cap\mathcal{O}_{L}\simeq
\lbrace
\overbrace{1,\delta,\bar{\delta};}^{\text{trivial subgroup}}
\overbrace{\alpha,\frac{\alpha\delta}{d_2d_4d_5},\frac{\alpha\bar{\delta}}{d_1d_2d_5}}^{\text{first coset}};
\overbrace{\bar{\alpha},\frac{\bar{\alpha}\delta}{d_1d_4d_5},\frac{\bar{\alpha}\bar{\delta}}{d_1d_2d_4}}^{\text{second coset}}
\rbrace,
\end{equation}
represented by the norms (with abbreviations $ab^2=d_1d_2d_3d_4^2d_5^2d_6^2$, $a^2b=d_1^2d_2^2d_3^2d_4d_5d_6$)
\begin{equation}
\label{eqn:Norms}
\lbrace
\overbrace{1,ab^2,a^2b}^{\text{trivial subgroup}};
\overbrace{d_1d_2^2d_4d_5^2,d_1^2d_3d_5d_6^2,d_2d_3^2d_4^2d_6}^{\text{first coset}};
\overbrace{d_1^2d_2d_4^2d_5,d_2^2d_3d_4d_6^2,d_1d_3^2d_5^2d_6}^{\text{second coset}}
\rbrace.
\end{equation}

%-----------------------------------------------------------------------

\begin{theorem}
\label{thm:WilliamsErrors}
Among the $12\,220$ pure cubic fields $L=\mathbb{Q}(\sqrt[3]{d})$
with normalized radicands in the range $2\le d<15\,000$,
there occur more $\mathrm{M}0$-fields than the $16$ cases
listed by H. C. Williams \cite[\S\ 6, Tbl. 2, p. 273]{HCW1982},
\begin{equation}
\label{eqn:Correct}
2,455,833,850,1078,1235,1573,3857,4901,6061,6358,8294,8959,12121,12818,14801.
\end{equation}
The five missing radicands are:
\begin{equation}
\label{eqn:Missing}
1\,430,\ 6\,370, \ 9\,922,\ 11\,284,\ 12\,673.
\end{equation}
So there are precisely $21$ cases of $\mathrm{M}0$-fields in this range.
\end{theorem}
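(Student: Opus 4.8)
The statement is a finite classification assertion, so the plan is to apply the provably correct algorithm of Section~\ref{ss:Algorithm} to each of the $12\,220$ normalized radicands in the range $2\le d<15\,000$, and to certify the M0-property directly from Definition~\ref{dfn:M0Fields} rather than from a naive inspection of Voronoi's chain in the maximal order (which is precisely where the defects of \cite{HCW1982} arise). The decisive methodological point is to determine the principal factorization type of each field by the arithmetic and cohomological criterion of \cite[Thm.~2.1]{AMITA}, using the conductor $f$, the ramification invariant $R$, and the canonical divisor data of Sections~\ref{ss:Ramification} and~\ref{ss:Canonical}, \emph{independently} of the chain $\Theta$. This isolates the fields of type $\beta$, for which a nontrivial absolute principal factor is guaranteed to exist in $\mathcal{P}_L^G/\mathcal{P}_{\mathbb{Q}}$ even when it never surfaces among the lattice minima. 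I would split the radicands by the class of $d$ modulo $9$: for $d\equiv\pm 1\,(\mathrm{mod}\,9)$ the algorithm works with the non-maximal order $\mathcal{O}_{L,0}$, while for $d\equiv\pm 2,\pm 4\,(\mathrm{mod}\,9)$ it exploits the impossibility of type $\gamma$.

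For each type-$\beta$ field I would then test the defining condition of Definition~\ref{dfn:M0Fields}, namely whether $\mathrm{Min}(\mathcal{O}_L)\cap\Delta_{L/\mathbb{Q}}=E_L^+$. The only candidate generators are the six nontrivial coset representatives of Equation~\eqref{eqn:Cosets}, whose norms are recorded in Equation~\eqref{eqn:Norms}; the field is M0 precisely when, after normalization modulo $E_L\cdot\mathbb{Q}^{\times}$, none of these is a lattice minimum of $\mathcal{O}_L$. To decide this one computes the fundamental unit $\varepsilon$ and enumerates $\mathrm{Min}(\mathcal{O}_L)$ through a single period of the chain $\Theta$, between $\theta_0=1$ and $\theta_{\ell}=\varepsilon$ (cf.\ Figure~\ref{fig:LatticeMinima} and Lemma~\ref{lem:Units}), testing each candidate against the norm-cylinder criterion~\eqref{eqn:MinimumVar}.

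Having run the enumeration over the whole range, the final step is bookkeeping: I would confirm that the $16$ radicands of Equation~\eqref{eqn:Correct} are indeed M0, verify that the five radicands of Equation~\eqref{eqn:Missing} are genuinely of type $\beta$ yet satisfy $\mathrm{Min}(\mathcal{O}_L)\cap\Delta_{L/\mathbb{Q}}=E_L^+$, and confirm that no further M0-field occurs, giving the total of $21$. For each of the five new radicands I would additionally trace the discrepancy with \cite{HCW1982} to the phenomenon announced in the introduction: the chain $\Theta$ fails to encounter an absolute principal factor although one exists abstractly, so that any classification reading off principal-factor norms solely from Voronoi's chain in $\mathcal{O}_L$ necessarily overlooks the field.

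The main obstacle is the certification of the \emph{negative} half of the M0-condition. Asserting that \emph{no} nontrivial principal factor occurs as a lattice minimum requires a provably \emph{complete} enumeration of $\mathrm{Min}(\mathcal{O}_L)$ over one full period of $\Theta$, which in turn hinges on an exact fundamental unit $\varepsilon$ and on a guarantee that the chain algorithm neither terminates prematurely nor admits a spurious minimum. This completeness is exactly the point at which the classical approach of \cite{HCW1982} breaks down, and the reduction to the order $\mathcal{O}_{L,0}$ for $d\equiv\pm 1\,(\mathrm{mod}\,9)$, respectively the exclusion of type $\gamma$ for $d\equiv\pm 2,\pm 4\,(\mathrm{mod}\,9)$, in Section~\ref{ss:Algorithm} is what renders the enumeration rigorous and makes the five omissions detectable.
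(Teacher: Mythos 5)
Your proposal is correct in substance and follows the same overall strategy that the paper (implicitly) relies on: this theorem carries no formal proof of its own, and its justification is assembled from the classification algorithm of Theorem~\ref{thm:Algorithm} together with a certification of the M0-property for each type-$\beta$ field. Where you diverge from the paper is in the certification tool. The paper does \emph{not} enumerate a period of $\Theta$ and search it for principal factors; it invokes the closed-form necessary and sufficient criteria of Theorem~\ref{thm:PrincipalFactorMinima} (and Corollary~\ref{cor:PrincipalFactorMinima}), which decide $\alpha\in\mathrm{Min}(\mathcal{O}_{L})$ purely from the congruence invariants $u_1,u_2$ and the normalized radicals $\gamma,\bar{\gamma}$, and then specializes these in Theorems~\ref{thm:SquareFreeSpec2} and~\ref{thm:SquarePartSpec1B} to congruences modulo $3$ and inequalities among the canonical divisors $d_1,\dots,d_6$; Examples~\ref{exm:SquareFreeSpec2Var1}, \ref{exm:1430} and~\ref{exm:SquarePartSpec1B} and Table~\ref{tbl:JustificationSpec2} then dispose of $1\,430$ and $12\,673$ (and confirm six of Williams' sixteen) by elementary integer arithmetic, the remaining radicands being settled by the same criteria in the systematic computation mentioned in Remark~\ref{rmk:Algorithm}. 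This buys exactly what your last paragraph identifies as the weak point of your route: no completeness guarantee for the period and no exact $\varepsilon$ are needed to certify the \emph{negative} half of the M0-condition. Two further remarks on your version. First, the negative certification is in any case a finite check independent of $\varepsilon$: for a fixed candidate $\alpha$ the norm cylinder has volume $\pi\,N_{L/\mathbb{Q}}(\alpha)$ and contains only finitely many lattice points, so testing \eqref{eqn:MinimumVar} directly does not require traversing the chain at all; the fundamental unit enters only through the type determination via $Q$ and the periodicity of Lemma~\ref{lem:Periodicity}. Second, if one does traverse the chain, membership must be tested on the level of elements and not of norms: as Example~\ref{exm:1430} shows with $\theta_{-28}$ of norm $183$, shadow norms can coincide with principal factor norms, and this is precisely the trap that produced the defects in \cite{HCW1982}. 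Your proposal avoids it by testing the cylinder criterion itself, so the argument goes through, but at a higher computational cost than the paper's rational-integer criteria.
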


%-----------------------------------------------------------------------

%\noindent
%Due to the periodicity of

\begin{lemma}
\label{lem:Periodicity}
If the fundamental unit $\varepsilon$ is the $\ell$th lattice minimum,
counted from the trivial unit $1$ in the direction of increasing height,
then the norms of lattice minima are periodic with primitive period length $\ell$, that is,
\begin{equation}
\label{eqn:periodicity}
(\forall\,0\le j\le\ell-1)\,(\forall\,n\in\mathbb{Z})\,N_{L/\mathbb{Q}}(\theta_{j+n\cdot\ell})=N_{L/\mathbb{Q}}(\theta_{j}).
\end{equation}
%we can always find an index $j$ such that $\alpha$ is an associate of the lattice minimum $\theta_j$,
%that is, $(\exists\,1\le j\le\ell)\,(\exists\,n\in\mathbb{Z})\,\alpha=\varepsilon^n\cdot\theta_j$.
\end{lemma}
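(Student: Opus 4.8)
The plan is to turn multiplication by the fundamental unit $\varepsilon>1$ into a geometric symmetry of the lattice of minima, and then to read off the periodicity of the norms from $N_{L/\mathbb{Q}}(\varepsilon)=1$. Since $\psi$ is compatible with the ring structure, for every $\xi\in L$ one has $\psi(\varepsilon\xi)=M_{\varepsilon}\,\psi(\xi)$, where $M_{\varepsilon}$ is the $\mathbb{R}$-linear automorphism of $\mathbb{R}^{3}$ that acts on the $(u,v)$-plane as multiplication by the complex number $\varepsilon^{\prime}$ (a rotation combined with a scaling by $\lvert\varepsilon^{\prime}\rvert$) and on the height axis as scaling by the real embedding value $\varepsilon$. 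First I would record two compatibilities of $M_{\varepsilon}$: because $\varepsilon$ is a unit, $\varepsilon\mathcal{O}_{L}=\mathcal{O}_{L}$ and hence $M_{\varepsilon}$ maps the lattice $\psi(\mathcal{O}_{L})$ bijectively onto itself (the inverse being $M_{\varepsilon^{-1}}$); and, as shown next, $M_{\varepsilon}$ carries each norm cylinder exactly onto another norm cylinder.

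The second compatibility is the geometric heart of the argument. For $\mathbf{u}=(u,v,w)\in\mathcal{N}(\psi(\alpha))$ one has $0\le u^{2}+v^{2}<\alpha^{\prime}\alpha^{\prime\prime}$ and $0\le w<\alpha$ by \eqref{eqn:NormCylinder}. Applying $M_{\varepsilon}$ multiplies the radial square $u^{2}+v^{2}$ by $\lvert\varepsilon^{\prime}\rvert^{2}=\varepsilon^{\prime}\varepsilon^{\prime\prime}$ and the height $w$ by $\varepsilon>0$; since $(\varepsilon\alpha)^{\prime}(\varepsilon\alpha)^{\prime\prime}=\varepsilon^{\prime}\varepsilon^{\prime\prime}\alpha^{\prime}\alpha^{\prime\prime}$ and the real embedding of $\varepsilon\alpha$ equals $\varepsilon\alpha$, the image $M_{\varepsilon}\mathbf{u}$ satisfies exactly the inequalities defining $\mathcal{N}(\psi(\varepsilon\alpha))$. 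As $M_{\varepsilon}$ is invertible this gives $M_{\varepsilon}\bigl(\mathcal{N}(\psi(\alpha))\bigr)=\mathcal{N}(\psi(\varepsilon\alpha))$. Intersecting with the lattice and using the first compatibility yields $\alpha\in\mathrm{Min}(\mathcal{O}_{L})\Leftrightarrow\varepsilon\alpha\in\mathrm{Min}(\mathcal{O}_{L})$.

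Thus multiplication by $\varepsilon$ is a bijection of $\mathrm{Min}(\mathcal{O}_{L})$ onto itself, and since $\varepsilon>1$ it strictly increases the height coordinate and therefore respects the ordering of the chain $\Theta$ by increasing height. An order-preserving bijection of a chain indexed by $\mathbb{Z}$ must be an index shift $\theta_{j}\mapsto\theta_{j+c}$ with a fixed $c$; evaluating at $\theta_{0}=1$, whose image is $\varepsilon=\theta_{\ell}$, forces $c=\ell$, so that $\varepsilon\,\theta_{j}=\theta_{j+\ell}$ for all $j\in\mathbb{Z}$. Applying $N_{L/\mathbb{Q}}$ and using multiplicativity together with $N_{L/\mathbb{Q}}(\varepsilon)=1$ yields $N_{L/\mathbb{Q}}(\theta_{j+\ell})=N_{L/\mathbb{Q}}(\theta_{j})$, and iterating (with $M_{\varepsilon^{-1}}$ for negative $n$) gives the asserted identity for every $n\in\mathbb{Z}$. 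For primitivity I would note that $\varepsilon^{n}\theta_{j}=\theta_{j+n\ell}$ places the positive units $E_{L}^{+}=\langle\varepsilon\rangle$ exactly at the indices $n\ell$; since by Lemma \ref{lem:Units} every positive unit is a lattice minimum while $\varepsilon$ is the first unit past $\theta_{0}=1$, none of $\theta_{1},\dots,\theta_{\ell-1}$ is a unit, so no proper divisor of $\ell$ can serve as a shift period.

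The only genuinely geometric step is the cylinder identity $M_{\varepsilon}(\mathcal{N}(\psi(\alpha)))=\mathcal{N}(\psi(\varepsilon\alpha))$, where the height scaling by $\varepsilon$ and the radial scaling by $\varepsilon^{\prime}\varepsilon^{\prime\prime}$ must be matched exactly against Definition \ref{dfn:Minima}; I expect this to be the main point requiring care, while the passage from the $\varepsilon$-symmetry to the index shift and then to the norm periodicity is formal.
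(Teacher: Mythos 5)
Your proof is correct and follows essentially the same route as the paper: both rest on the identity $\theta_{j+n\ell}=\varepsilon^{n}\theta_{j}$ (the orbit decomposition of $\Theta$ under $E_{L}^{+}$) followed by multiplicativity of the norm and $N_{L/\mathbb{Q}}(\varepsilon)=1$. The only difference is that the paper simply asserts this orbit decomposition (citing Figure \ref{fig:LatticeMinima}), whereas you supply the justification -- that multiplication by $\varepsilon$ maps the lattice onto itself and carries norm cylinders onto norm cylinders, hence induces an order-preserving index shift of the chain -- which is a welcome addition but not a different method.
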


\begin{proof}
Let $\varepsilon>1$ be the normpositive fundamental unit bigger than the trivial unit $1$ of $L$.
Then $0<\varepsilon^{-1}<1$ is the inverse normpositive fundamental unit of $L$.
Due to the decomposition
\begin{equation}
\label{eqn:Orbits}
\Theta=(\theta_j)_{j\in\mathbb{Z}}=((\theta_{j+n\cdot\ell})_{n\in\mathbb{Z}})_{0\le j<\ell},
\quad \text{ respectively } \quad
\mathrm{Min}(\mathcal{O}_{L})=\bigcup_{j=0}^{\ell-1}\,E_{L}^+\cdot\theta_j
\end{equation}
of the chain $\Theta$, respectively of the set  $\mathrm{Min}(\mathcal{O}_{L})$,
where $\theta_{n\cdot\ell}=\varepsilon^n$ for all $n\in\mathbb{Z}$,
into orbits under the action of $E_{L}^+=\lbrace\varepsilon^n\mid n\in\mathbb{Z}\rbrace$
with representatives $1\le\theta_{j}<\varepsilon$, $0\le j<\ell$, in the first primitive period,
visualized impressively in Figure \ref{fig:LatticeMinima},
we have
\begin{equation}
\label{eqn:associates}
(\forall\,0\le j\le\ell-1)\,(\forall\,n\in\mathbb{Z})\,\theta_{j+n\cdot\ell}=\varepsilon^n\cdot\theta_{j},
\end{equation}
and thus
$N_{L/\mathbb{Q}}(\theta_{j+n\cdot\ell})
=N_{L/\mathbb{Q}}(\varepsilon^n\cdot\theta_{j})
=N_{L/\mathbb{Q}}(\varepsilon)^n\cdot N_{L/\mathbb{Q}}(\theta_{j})
=1\cdot N_{L/\mathbb{Q}}(\theta_{j})
=N_{L/\mathbb{Q}}(\theta_{j})$.
\end{proof}

%\newpage

%-----------------------------------------------------------------------

\section{Necessary and sufficient conditions for minimal principal factors}
\label{ss:PrincipalFactorMinima}

\noindent
We now state the main theorem on principal factors among the lattice minima.

\begin{theorem}
\label{thm:PrincipalFactorMinima}
Let $L=\mathbb{Q}(\sqrt[3]{d})$ be a pure cubic field
of principal factorization type $\beta$
with normalized cube-free radicand $d=ab^2>1$.
Suppose that $\alpha\in\mathcal{O}_{L}\setminus E_{L}$ is
generator of a primitive ambiguous principal ideal
$(\alpha)\in\mathcal{P}_{L}^G/{P}_{\mathbb{Q}}$ of $L$
with norm $n=N_{L/\mathbb{Q}}(\alpha)=3^v\cdot d_1d_2^2d_4d_5^2$,
where $v\ge 1$ at most for $d\equiv\pm 2,\pm 4\,(\mathrm{mod}\,9)$,
and that $\gamma=\sqrt[3]{ab^2}/d_2d_4d_5>1$ and $\bar{\gamma}=\sqrt[3]{a^2b}/d_1d_2d_5>1$.
Then the criteria for the occurrence of $\alpha$ among the lattice minima
of the chain $\Theta$ of the maximal order $\mathcal{O}_{L}$,
respectively $\Phi$ of the non-maximal order $\mathcal{O}_{L,0}$
with conductor $\mathfrak{l}^\sigma\mathfrak{l}$, where $3\mathcal{O}_{L}=\mathfrak{l}^\sigma\mathfrak{l}^2$
\cite{Dk},
if $d\equiv\pm 1\,(\mathrm{mod}\,9)$,
can be partitioned in the following way:

\begin{itemize}

\item
Unconditional criteria:

\begin{enumerate}
\item
If $L$ is of species $1\mathrm{a}$, $3\mid d$, then $\alpha\in\mathrm{Min}(\mathcal{O}_{L})$.
\item
If $L$ is of species $1\mathrm{b}$, $d\equiv\pm 2,\pm 4\,(\mathrm{mod}\,9)$,
and $v=0$, then $\alpha\in\mathrm{Min}(\mathcal{O}_{L})$.
\item
If $L$ is of species $2$, $d\equiv\pm 1\,(\mathrm{mod}\,9)$, then $\alpha\in\mathrm{Min}(\mathcal{O}_{L,0})$.
\end{enumerate}

\item
Conditional criteria in dependence on $u_1\equiv d_1d_3d_4d_5\,(\mathrm{mod}\,3)$ and $u_2\equiv d_1d_2d_4d_6\,(\mathrm{mod}\,3)$:

\begin{enumerate}
\item
If $L$ is of species $1\mathrm{b}$, $d\equiv\pm 2,\pm 4\,(\mathrm{mod}\,9)$,
and $v=1$, or $L$ is of species $2$, $d\equiv\pm 1\,(\mathrm{mod}\,9)$,
let two critical bivariate polynomials be defined by
\begin{equation}
\label{eqn:Polynomial}
\begin{aligned}
P_2(X,Y) &:= X^2+Y^2-XY-X-Y+1\in\mathbb{Z}\lbrack X,Y\rbrack, \\
P_4(X,Y) &:= X^4-X^3+X^2Y-8X^2+XY+Y^2\in\mathbb{Z}\lbrack X,Y\rbrack.
\end{aligned}
\end{equation}
Then the following necessary and sufficient criterion holds:
\begin{equation}
\label{eqn:Escalatory3}
\begin{aligned}
\alpha\not\in\mathrm{Min}(\mathcal{O}_{L})
&\Longleftrightarrow
(u_1,u_2)\neq (1,1) \text{ and } P_2(u_1L,u_2\bar{\gamma})<9 \\
&\Longleftrightarrow
(u_1,u_2)\neq (1,1) \text{ and } P_4(u_1\gamma,-u_1u_2y)<0 \\
&\Longleftrightarrow
(u_1,u_2)\neq (1,1) \text{ and } P_4(u_2\bar{\gamma},-u_1u_2y)<0.
\end{aligned}
\end{equation}
For $(u_1,u_2)\neq (1,1)$, a coarse sufficient, but not necessary, condition is given by:
\begin{equation}
\label{eqn:InverseCoarse3}
\max\left(\frac{\gamma}{B(u_2)},\frac{\bar{\gamma}}{B(u_1)}\right)\ge 1
\Longrightarrow
\alpha\in\mathrm{Min}(\mathcal{O}_{L}),
\end{equation}
where the bound is defined by
\begin{equation}
\label{eqn:Bound3}
B(u):=
\begin{cases}
\sqrt{6}\approx 2.44948974278318 & \text{ if } u=-1, \\
2                                & \text{ if } u=1.
\end{cases}
\end{equation}
\item
If $L$ is of species $1\mathrm{b}$, $d\equiv\pm 2,\pm 4\,(\mathrm{mod}\,9)$,
and $v=2$,
let a critical bound be defined by
\begin{equation}
\label{eqn:Bound9}
C(u):=
\begin{cases}
\frac{1}{2}(-1+\sqrt{33})\approx 2.37228132326901 & \text{ if } u=1, \\
2                                                 & \text{ if } u=-1.
\end{cases}
\end{equation}
Then the following necessary and sufficient criterion holds:
\begin{equation}
\label{eqn:Escalatory9}
\alpha\not\in\mathrm{Min}(\mathcal{O}_{L})
\Longleftrightarrow
\min\left(\frac{\gamma}{C(u_1)},\frac{\bar{\gamma}}{C(u_2)}\right)<1.
\end{equation}
\end{enumerate}

\end{itemize}

\end{theorem}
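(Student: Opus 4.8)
The plan is to translate the abstract condition $\alpha\in\mathrm{Min}(\mathcal{O}_L)$ into an explicit system of inequalities among the real conjugates, by exploiting the structure of the coset decomposition in Equation~\eqref{eqn:Cosets}. By Definition~\ref{dfn:Minima}, $\alpha$ fails to be a lattice minimum precisely when some nonzero $\beta\in\mathcal{O}_L$ lies in the norm cylinder $\mathcal{N}(\psi(\alpha))$, i.e.\ satisfies $0\le\beta'\beta''<\alpha'\alpha''$ together with $0\le\beta<\alpha$. The first step is to reduce the search space for such a $\beta$. By Lemma~\ref{lem:Periodicity} the norms are periodic with period $\ell$, and by Lemma~\ref{lem:Units} neither the radical $\delta$ nor $\bar\delta$ nor their multiples by a minimum can themselves be minima; so the only candidate competitors $\beta$ that could dislodge $\alpha$ are the finitely many coset representatives listed in~\eqref{eqn:Cosets}, possibly scaled by a power of $\varepsilon$. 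Because $\alpha$ has norm $n=3^v d_1d_2^2d_4d_5^2$ and lies in the first coset, the natural rivals are the other elements of that coset and the relevant members of the second coset (the $\beta$-orbit), which explains why the answer is governed by the two parameters $\gamma=\sqrt[3]{ab^2}/d_2d_4d_5$ and $\bar\gamma=\sqrt[3]{a^2b}/d_1d_2d_5$.

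Next I would set up explicit coordinates. Write $\alpha$ in terms of $1,\delta,\bar\delta$ with rational coefficients determined by the canonical divisors of~\eqref{eqn:Expressions}, and compute $\alpha$, $\alpha'$, $\alpha''$, hence $\alpha'\alpha''$, as real expressions in $\sqrt[3]{d}$ and $\zeta_3$. Using $\delta'\delta''=\delta^2$ and the conjugation relations recorded just before Definition~\ref{dfn:Minima}, the products $\alpha'\alpha''$ reduce to polynomial expressions in $\gamma$ and $\bar\gamma$. The congruence parameters $u_1\equiv d_1d_3d_4d_5$ and $u_2\equiv d_1d_2d_4d_6\pmod 3$ enter because the integrality of the competing $\beta\in\mathcal{O}_L$ depends, via the basis of the maximal order for the three residue classes of $d$ modulo $9$, on whether certain linear combinations with coefficients $\equiv\pm1\pmod3$ are themselves algebraic integers; each admissible sign pattern produces one candidate inequality $\beta'\beta''<\alpha'\alpha''$ with $\beta<\alpha$. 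Carrying this out for every coset representative and every admissible residue of $\beta$ yields a finite list of simultaneous inequalities whose satisfiability is exactly the condition $\alpha\notin\mathrm{Min}(\mathcal{O}_L)$. Collecting and clearing denominators, these inequalities collapse into the polynomial conditions $P_2(\cdot,\cdot)<9$ and $P_4(\cdot,\cdot)<0$.

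For the unconditional cases (1a, 1b with $v=0$, and species $2$ over the non-maximal order $\mathcal{O}_{L,0}$) the plan is to show that the relevant norm $n$ is so small relative to $\sqrt[3]{d}$ that the cylinder $\mathcal{N}(\psi(\alpha))$ simply cannot contain any competing integer: here the argument mirrors the unit computation in Lemma~\ref{lem:Units}, bounding $\beta'\beta''\cdot\beta=N_{L/\mathbb{Q}}(\beta)$ from below by an integer that already exceeds $n$, forcing $\beta=0$. For species~$2$ one must first justify passing to $\mathcal{O}_{L,0}$ with conductor $\mathfrak{l}^\sigma\mathfrak{l}$: since $3$ is unramified in $k/k_0$ and $3\mathcal{O}_L=\mathfrak{l}^\sigma\mathfrak{l}^2$ by~\cite{Dk}, the non-maximal order removes exactly the $\delta/3$-type integers that would otherwise contaminate the cylinder, and this is what makes the criterion clean. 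The conditional cases require the sharpest analysis, and establishing the equivalences among the three forms in~\eqref{eqn:Escalatory3} will involve an algebraic identity relating $P_2$ evaluated at $(u_1\gamma,u_2\bar\gamma)$ to $P_4$ evaluated at $(u_1\gamma,-u_1u_2 y)$ and at $(u_2\bar\gamma,-u_1u_2 y)$, using the defining relation $\gamma\bar\gamma\,\delta\cdot(\text{divisor factor})=$ const that ties $\gamma$, $\bar\gamma$, and $y=\sqrt[3]{d}$ together.

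The main obstacle will be the bookkeeping in the conditional case: enumerating, for each of the three residue classes of $d$ mod $9$ and each sign pair $(u_1,u_2)\in\{\pm1\}^2$, exactly which lattice points $\beta$ are integral and which yield a genuine violation, and then verifying that the resulting finite collection of inequalities is precisely captured by the sign of $P_2-9$ (equivalently of $P_4$). The subtle point is that $(u_1,u_2)=(1,1)$ must be excluded as a special case where no competitor is integral, so the equivalence in~\eqref{eqn:Escalatory3} genuinely fails without that hypothesis; checking that the polynomial inequalities correctly reproduce this exclusion, and that the coarse bound~\eqref{eqn:InverseCoarse3} via $B(u)$ follows by a monotonicity estimate on $P_2$, is where the delicate case-distinction work lies. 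The $v=2$ case of~\eqref{eqn:Escalatory9} is analogous but simpler, since only a $\min$ of two ratios against the bound $C(u)$ survives, reflecting that fewer competitors remain integral when the $3$-part of the norm is maximal.
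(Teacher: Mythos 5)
Your outline founders at the reduction step in your first paragraph. You claim that the only candidates $\beta$ that could lie in the norm cylinder $\mathcal{N}(\psi(\alpha))$ are the coset representatives of \eqref{eqn:Cosets}, possibly scaled by units, citing Lemmas \ref{lem:Units} and \ref{lem:Periodicity}; neither lemma says anything of the sort, and the claim is false. A competitor $\beta$ merely satisfies $0\le\beta'\beta''<\alpha'\alpha''$ and $0\le\beta<\alpha$, hence $1\le N_{L/\mathbb{Q}}(\beta)<n$, and it need not be a principal factor at all: in Example \ref{exm:1430} the element that dislodges $\phi_{-16}$ (norm $1100$) from $\mathrm{Min}(\mathcal{O}_{L})$ is $\theta_{-17}=\frac{1}{3}\bigl(-1+\frac{1}{10}\delta+\frac{1}{110}\bar{\delta}\bigr)\phi_{-16}$ of norm $239$, which belongs to no coset in \eqref{eqn:Cosets}. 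The entire difficulty of the theorem is to prove that the existence of \emph{any} nonzero lattice point in the cylinder is equivalent to the solvability of an explicit system of ternary diophantine quadratic inequalities in the coordinates of $\beta/\alpha$ with respect to the normalized radicals, with denominator $3$ and congruence conditions modulo $3$ governed by $(u_1,u_2)$; this is Williams' result \cite[Thm. 3.4]{HCW1981} together with \cite[Lem. 4.1--4.3]{HCW1981}, which the paper cites rather than reproves. You gesture at the integrality analysis in your second paragraph, but you never establish why the search may be confined to such combinations in the first place, and that is precisely the missing core of the argument.

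The mechanism you propose for the unconditional cases also fails. Mirroring Lemma \ref{lem:Units} by ``bounding $N_{L/\mathbb{Q}}(\beta)$ from below by an integer that already exceeds $n$'' is impossible: the cylinder condition forces $N_{L/\mathbb{Q}}(\beta)<n$, and since $n$ can be as large as $R^2$ there is ample room for nonzero integer norms below it; the unit argument works only because $n=1$ there. Williams' actual proof of \cite[Thm. 2]{HCW1980}, which the paper invokes for cases 1 and 2 and then reuses for case 3 after observing that it is valid for the order $\mathbb{Z}\oplus\mathbb{Z}\delta\oplus\mathbb{Z}\bar{\delta}$ independently of $d\bmod 9$, is a divisibility and congruence obstruction showing that no competitor in the relevant order can be an algebraic integer, not a volume or norm-size estimate. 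Finally, be aware that the paper's own proof is essentially an assembly of citations plus three genuinely new ingredients --- the passage to $\mathcal{O}_{L,0}$ for species $2$, the $P_4$ reformulation yielding the coarse condition \eqref{eqn:InverseCoarse3}, and the uniqueness argument upgrading \cite[Lem. 4.3]{HCW1981} to the biconditional \eqref{eqn:Escalatory9} --- none of which your sketch supplies.
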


%-----------------------------------------------------------------------

\begin{proof}
The major part of the proof is due to Williams.
However, it is scattered among several papers
\cite{HCW1980,HCW1981,HCW1982},
and some cases have never been formulated as necessary and sufficient criteria.
Generally, let $\alpha\in\mathcal{O}_{L}$ be a principal factor with norm
$n=N_{L/\mathbb{Q}}(\alpha)=3^v\cdot d_1d_2^2d_4d_5^2$,
where $v\in\lbrace 0,1,2\rbrace$
and $v\ge 1$ at most for $d\equiv\pm 2,\pm 4\,(\mathrm{mod}\,9)$.

\begin{itemize}

\item
Concerning the unconditional criteria:

\begin{enumerate}
\item
The claim that generally $\alpha\in\mathrm{Min}(\mathcal{O}_{L})$
for $d\equiv 0,\pm 3\,(\mathrm{mod}\,9)$ (whence $v=0$)
is proved in
\cite[\S\ 4, Thm. 2, p. 1427]{HCW1980}
and again in
\cite[\S\ 5, Thm. 5.1(i), p. 643]{HCW1981}.
\item
$\alpha\in\mathrm{Min}(\mathcal{O}_{L})$
for $d\equiv\pm 2,\pm 4\,(\mathrm{mod}\,9)$ with $v=0$
is also proven in
\cite[Thm. 2]{HCW1980}.
\item
The statement that $\alpha\in\mathrm{Min}(\mathcal{O}_{L,0})$
for $d\equiv\pm 1\,(\mathrm{mod}\,9)$ (and hence $v=0$)
is due to ourselves, and provides considerable computational simplification,
as Theorem \ref{thm:Algorithm} will show.
For fields of the second species,
$(1,\delta,\bar{\delta})$ is not an integral basis of the maximal order $\mathcal{O}_{L}$,
but it is a basis of the non-maximal order
$\mathcal{O}_{L,0}=\mathbb{Z}\oplus\mathbb{Z}\delta\oplus\mathbb{Z}\bar{\delta}$
with conductor $\mathfrak{l}^\sigma\mathfrak{l}$, where $3\mathcal{O}_{L}=\mathfrak{l}^\sigma\mathfrak{l}^2$.
The proof in 
\cite[\S\ 4, Thm. 2, p. 1427]{HCW1980}
is generally valid for the order $\mathbb{Z}\oplus\mathbb{Z}\delta\oplus\mathbb{Z}\bar{\delta}$
and does not use the incongruence $d\not\equiv\pm 1\,(\mathrm{mod}\,9)$.
Thus it also holds for $d\equiv\pm 1\,(\mathrm{mod}\,9)$.
\end{enumerate}

\item
Concerning the conditional criteria for
either $d\equiv\pm 2,\pm 4\,(\mathrm{mod}\,9)$ with $v\ge 1$
or the maximal order in the case $d\equiv\pm 1\,(\mathrm{mod}\,9)$,
\cite[\S\ 3, Thm. 3.4, p. 638]{HCW1981}
establishes a diophantine criterion for the existence of a non-trivial lattice point
within the norm cylinder of an algebraic integer with principal factor norm.
In \cite[\S\ 4, Lem. 4.1, p. 639]{HCW1981},
the possible solutions of
this critical system of diophantine ternary quadratic inequalities
are narrowed down generally.

\begin{enumerate}
\item
For either $d\equiv\pm 2,\pm 4\,(\mathrm{mod}\,9)$ with $v=1$
or the maximal order in the case $d\equiv\pm 1\,(\mathrm{mod}\,9)$,
it is shown in \cite[\S\ 4, Lem. 4.2, p. 640]{HCW1981}
that the diophantine criterion has a unique solution in dependence on $(u_1,u_2)$,
except for $(u_1,u_2)=(1,1)$, where $\alpha\in\mathrm{Min}(\mathcal{O}_{L})$ turns out generally.
The final conclusion is given in the later paper
\cite[\S\ 4, Thm. 4.1, p. 268]{HCW1982}
in terms of our quadratic polynomial $P_2(X,Y)$.
Our transformation in terms of the fourth degree polynomial $P_4(X,Y)$ is new and
permits the deduction of a coarse sufficient condition for the converse statement
in formulas \eqref{eqn:InverseCoarse3} and \eqref{eqn:Bound3}
by investigating the zero locus of $P_4(X,Y)$ in the $XY$-plane.
An even coarser sufficient condition is given in
\cite[\S\ 5, Thm. 5.1(ii)--(iii), p. 643]{HCW1981}
by generally taking the bigger bound $\sqrt{6}>2$.
\item
Finally, for $d\equiv\pm 2,\pm 4\,(\mathrm{mod}\,9)$ with $v=2$,
a few solutions of the diophantine criterion
are found in \cite[\S\ 4, Lem. 4.3, p. 642]{HCW1981} in dependence on $(u_1,u_2)$,
but no concluding theorem is stated.
We proved that the solution in dependence on $(u_1,u_2)$ is in fact unique
for each of the normalized radicals $\gamma$ and $\bar{\gamma}$,
which leads to the necessary and sufficient criterion
in formulas \eqref{eqn:Bound9} and \eqref{eqn:Escalatory9}.
A coarse sufficient condition for the converse statement is given in
\cite[\S\ 5, Thm. 5.1(vi), p. 643]{HCW1981}
by generally taking the bigger bound $\frac{1}{2}(-1+\sqrt{33})>2$ %\qedhere.
\end{enumerate}
\end{itemize}
\end{proof}

\newpage

%--------------------------------------------------------------------------------

\begin{figure}[ht]
\caption{Zero locus of $P_4(X,Y)$}
\label{fig:ZeroLocus}
\includegraphics[width=14cm]{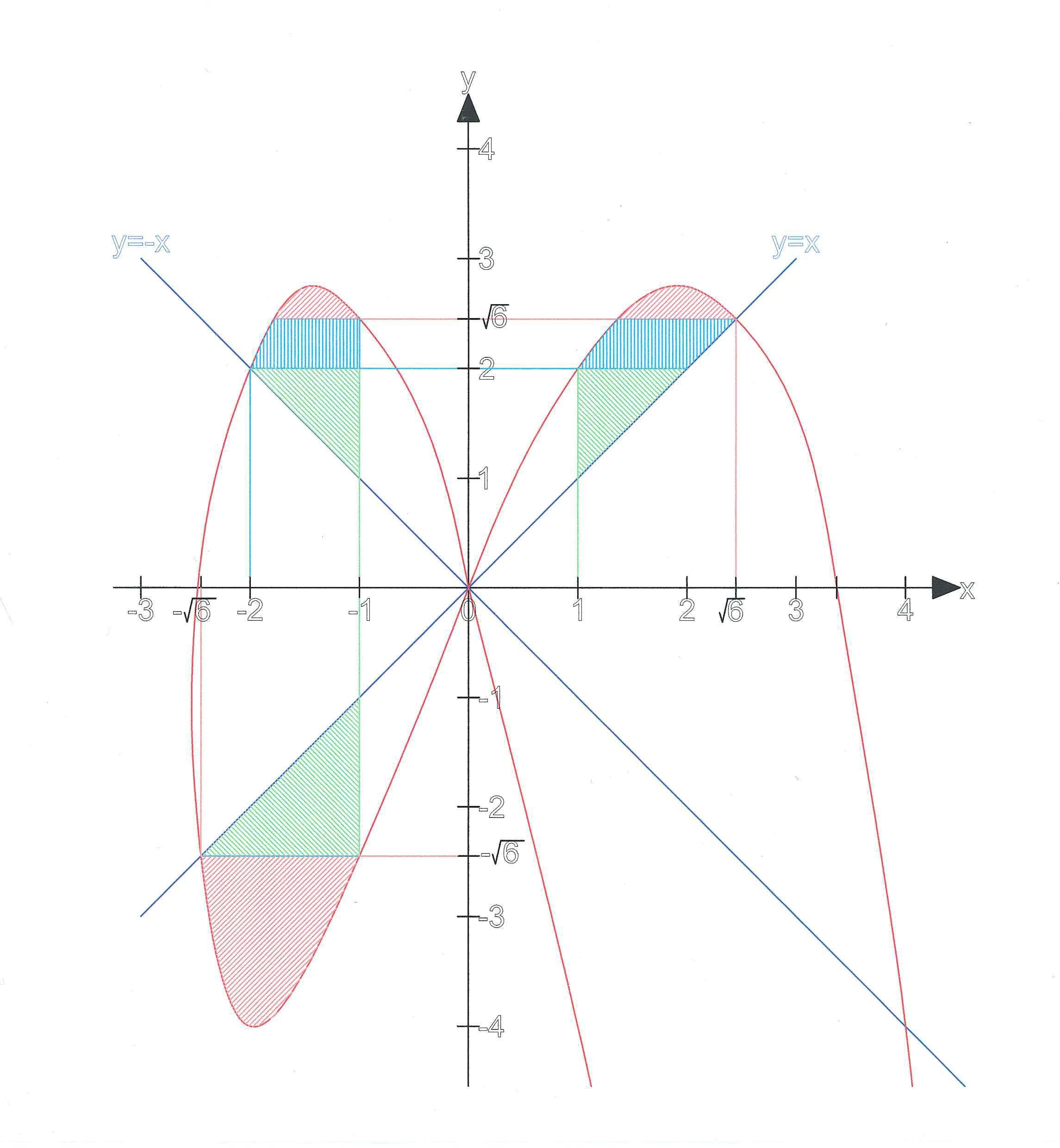}
\end{figure}

\noindent
In Figure \ref{fig:ZeroLocus},
the upper part $Y\ge 4$ of the zero locus 
of the bivariate polynomial $P_4(X,Y)\in\mathbb{Z}\lbrack X,Y\rbrack$
is plotted. This is the part which is relevant for deciding
whether a principal factor whose norm is not divisible by $9$ is a lattice minimum or not,
because in Equation \eqref{eqn:Escalatory3} of Theorem \ref{thm:PrincipalFactorMinima} 
the conditions $P_4(u_1\gamma,-u_1u_2y)<0$ and $P_4(u_2\bar{\gamma},-u_1u_2y)<0$ must be checked, both for
$(u_1,u_2)\neq (1,1)$, $\gamma>1$, $\bar{\gamma}>1$ and $y=\gamma\bar{\gamma}\ge\max(\gamma,\bar{\gamma})$.
Consequently, the quadrant $X>0$, $Y<0$, where the zero locus reaches down to $Y=-16$, does not concern the decision.
In the green triangles $Y\le\sqrt{6}$, respectively $Y\ge 2$, the condition holds automatically,
in the blue regions, only the left, and
in the red regions the left and right inequality must be tested.

%\newpage

%-----------------------------------------------------------------------

\begin{corollary}
\label{cor:PrincipalFactorMinima}
Under the assumptions and notations of Theorem \ref{thm:PrincipalFactorMinima},
a further coarse sufficient, but not necessary condition, is given by:
\begin{equation}
\label{eqn:Coarse3}
(u_1,u_2)\neq (1,1) \text{ and } y\le B(-u_1u_2)
\Longrightarrow
\alpha\not\in\mathrm{Min}(\mathcal{O}_{L}),
\end{equation}
for either $d\equiv\pm 2,\pm 4\,(\mathrm{mod}\,9)$ with $v=1$ or $d\equiv\pm 1\,(\mathrm{mod}\,9)$.
\end{corollary}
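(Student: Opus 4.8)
The plan is to read the statement off from the necessary-and-sufficient criterion \eqref{eqn:Escalatory3} of Theorem \ref{thm:PrincipalFactorMinima}, since the hypotheses of the corollary ($d\equiv\pm2,\pm4\,(\mathrm{mod}\,9)$ with $v=1$, or $d\equiv\pm1\,(\mathrm{mod}\,9)$) place us exactly in the regime governed by that criterion. Under the standing assumption $(u_1,u_2)\neq(1,1)$, the criterion identifies the failure $\alpha\notin\mathrm{Min}(\mathcal{O}_L)$ with the single polynomial inequality $P_4(u_1\gamma,-u_1u_2y)<0$ (equivalently its $u_2\bar{\gamma}$-form). Hence the entire content of the corollary is the implication $y\le B(-u_1u_2)\Rightarrow P_4(u_1\gamma,-u_1u_2y)<0$, a statement about the zero locus of $P_4$ alone that needs no further arithmetic input.

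First I would record the geometric constraints forced by $y=\gamma\bar{\gamma}$ with $\gamma,\bar{\gamma}>1$: one has $\gamma=y/\bar{\gamma}<y$ and $\bar{\gamma}=y/\gamma<y$, so simultaneously $1<\gamma,\bar{\gamma}<y$. Combined with the hypothesis this gives $1<\gamma,\bar{\gamma}<y\le B(-u_1u_2)$, confining the evaluation point to a small box. I would then split on the sign of $u_1u_2\in\{\pm1\}$, which controls both the numerical bound and the sign of the second argument $Y=-u_1u_2y$. If $u_1u_2=1$ then necessarily $u_1=u_2=-1$ (as $(u_1,u_2)\neq(1,1)$), the bound is $B(-1)=\sqrt{6}$, and the point is $(X,Y)=(-\gamma,-y)$ with $1<\gamma<y\le\sqrt{6}$. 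If $u_1u_2=-1$, the bound is $B(1)=2$ and the point is $(X,Y)=(\pm\gamma,y)$ with $1<\gamma<y\le2$. In both cases $|X|=\gamma<y=|Y|\le B(-u_1u_2)$.

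The heart of the argument is then the claim that $P_4<0$ throughout these boxes. The two bound values are explained by the sharp boundary identities $P_4(-\sqrt{6},-\sqrt{6})=0$ and $P_4(-2,2)=0$, which I would verify by direct substitution (using $(\sqrt{6})^2=6$ to cancel the odd-degree terms in the first); thus $(\sqrt{6},\sqrt{6})$ and $(2,2)$ are precisely the extreme corners $\gamma=y=B$ of the respective boxes at which $P_4$ vanishes. Since the constraint $\gamma<y$ is strict (because $\bar{\gamma}>1$), the point never reaches such a corner, and it remains to prove that $P_4$ does not change sign in the interior. Here I would regard $P_4$ as an upward parabola in $Y$, namely $P_4=Y^2+X(X+1)Y+X^2(X^2-X-8)$, whose discriminant $X^2(33+6X-3X^2)$ is positive exactly for $X\in(1-2\sqrt{3},\,1+2\sqrt{3})$; the range $|X|=\gamma<\sqrt{6}<1+2\sqrt{3}$ lies inside this interval, tightly so on the left, which is what forces the value $\sqrt{6}$. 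It then suffices to locate $Y$ strictly between the two roots $Y_\pm(X)$ over the whole range, the analogue of staying inside the green-triangle region of Figure \ref{fig:ZeroLocus}.

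The main obstacle is precisely this last step: confirming $Y_-(X)<Y<Y_+(X)$ uniformly over the full range $\gamma\in(1,y)$, $y\in(1,B]$, rather than merely near the extreme corner, so as to exclude any spurious re-emergence of $P_4\ge0$ inside the box. Concretely I would estimate the relevant root $Y_\pm(X)$ monotonically and compare it with $\mp y$ along the boundary segments $\gamma=1$, $\gamma=y$, and $y=B$ of the region, using the endpoint identities already computed. This is the same zero-locus analysis that underlies the complementary coarse bound \eqref{eqn:InverseCoarse3}--\eqref{eqn:Bound3} of Theorem \ref{thm:PrincipalFactorMinima}, now read on the other side of the curve; indeed \eqref{eqn:Coarse3} is its dual, asserting non-minimality when $y$ is small rather than minimality when $\gamma$ or $\bar{\gamma}$ is large, so I expect the same elementary estimates to close the argument.
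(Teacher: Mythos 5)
Your proposal takes exactly the paper's route: the paper's entire proof of this corollary is the single sentence that it ``follows from investigating the zero locus of $P_4(X,Y)$ in the $XY$-plane,'' and your reduction to showing $y\le B(-u_1u_2)\Rightarrow P_4(u_1\gamma,-u_1u_2y)<0$ on the box $1<\gamma,\bar{\gamma}<y\le B$, via the quadratic-in-$Y$ form $P_4=Y^2+X(X+1)Y+X^2(X^2-X-8)$, its discriminant $X^2(33+6X-3X^2)$, and the boundary identities $P_4(-\sqrt{6},-\sqrt{6})=0$ and $P_4(-2,2)=0$, is precisely that zero-locus analysis spelled out in more detail than the paper itself provides. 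The one uniform-negativity check you defer (and, in the subcase $(u_1,u_2)=(1,-1)$, note that the binding corner for the positive-argument form is $P_4(1,2)=0$ rather than $(2,2)$, so one should work with the equivalent $P_4(u_2\bar{\gamma},-u_1u_2y)$ form whose first argument is negative) is likewise left implicit in the paper, so your attempt matches and indeed refines the published argument.
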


\begin{proof}
This also follows from investigating the zero locus of $P_4(X,Y)$ in the $XY$-plane.
\end{proof}

%\newpage

%-----------------------------------------------------------------------

\section{Classification algorithm}
\label{ss:Algorithm}

\noindent
We continue with another main theorem on the classification
of pure cubic fields into \textit{principal factorization types}
\cite[\S\ 2.1]{AMITA}
with the aid of Voronoi's algorithm.
The decisive innovation in contrast to previous classification algorithms
is the use of a non-maximal order for species $2$.

\begin{theorem}
\label{thm:Algorithm}
Let $L=\mathbb{Q}(\sqrt[3]{d})$ be a pure cubic field
with normalized cube-free radicand $d\ge 2$, ramification invariant $R$,
according to equation \eqref{eqn:Ramification},
and subfield unit index $Q$,
according to \cite[\S\ 2.1]{AMITA}.
Denote the chain of lattice minima of the maximal order $\mathcal{O}_{L}$
by $\Theta=(\theta_j)_{j\in\mathbb{Z}}$
and its primitive period length by $\ell\ge 1$.
Then the following necessary and sufficient criteria
determine the principal factorization type of $L$
in dependence on the Dedekind species of the radicand $d$.
\begin{enumerate}
\item
If $L$ belongs to species $1\mathrm{a}$, $d\equiv 0,\pm 3\,(\mathrm{mod}\,9)$, then $L$ is of
\begin{enumerate}
\item
type $\alpha$ $\Longleftrightarrow$ $Q=1$, 
\item
type $\beta$ $\Longleftrightarrow$
$(\exists\,1\le j\le\ell-1)\,N_{L/\mathbb{Q}}(\theta_j)\mid R^2$,
\item
type $\gamma$ $\Longleftrightarrow$
$(\forall\,1\le j\le\ell-1)\,N_{L/\mathbb{Q}}(\theta_j)\nmid R^2$ and $Q=3$.
\end{enumerate}
\item
If $L$ belongs to species $1\mathrm{b}$, $d\equiv\pm 2,\pm 4\,(\mathrm{mod}\,9)$, then $L$ is of
\begin{enumerate}
\item
type $\alpha$ $\Longleftrightarrow$ $Q=1$,
\item
type $\beta$ $\Longleftrightarrow$
either $(\exists\,1\le j\le\ell-1)\,N_{L/\mathbb{Q}}(\theta_j)\mid R^2$ or $Q=3$.
\item
For this species, $L$ can never be of type $\gamma$.
\end{enumerate}
\item
If $L$ belongs to species $2$, $d\equiv\pm 1\,(\mathrm{mod}\,9)$,
let $\Phi=(\phi_j)_{j\in\mathbb{Z}}$ be the chain of lattice minima of the
non-maximal order $\mathcal{O}_{L,0}$
with conductor $\mathfrak{l}^\sigma\mathfrak{l}$, where $3\mathcal{O}_{L}=\mathfrak{l}^\sigma\mathfrak{l}^2$
\cite{Dk},
and $\ell_0\ge 1$ its primitive period length.
Then $L$ is of
\begin{enumerate}
\item
type $\alpha$ $\Longleftrightarrow$ $Q=1$,
\item
type $\beta$ $\Longleftrightarrow$ 
$(\exists\,1\le j\le\ell_0-1)\,N_{L/\mathbb{Q}}(\phi_j)\mid R^2$,
\item
type $\gamma$ $\Longleftrightarrow$ 
$(\forall\,1\le j\le\ell_0-1)\,N_{L/\mathbb{Q}}(\phi_j)\nmid R^2$ and $Q=3$.
\end{enumerate}
\end{enumerate}
\end{theorem}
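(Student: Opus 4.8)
The plan is to reduce the theorem to the cohomological classification of \cite[Thm.~2.1]{AMITA} and to translate its defining condition for type $\beta$---the existence of an \emph{absolute} principal factor---into the verifiable statement that some non-unit lattice minimum in the relevant chain has norm dividing $R^2$. Since the three types are mutually exclusive and exhaustive, and the equivalence ``type $\alpha\Leftrightarrow Q=1$'' holds for every species by \cite[Thm.~2.1]{AMITA}, part~(a) is immediate in all three cases and the entire content lies in separating types $\beta$ and $\gamma$ within the regime $Q=3$. Parts~(c) are then the logical complements of (a) and (b), while the assertion that type $\gamma$ is impossible for species $1\mathrm{b}$ is itself supplied by \cite[Thm.~2.1]{AMITA}.

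First I would record the direction common to all species: if a lattice minimum $\theta_j$ (respectively $\phi_j$) with $1\le j\le\ell-1$ (respectively $1\le j\le\ell_0-1$) satisfies $N_{L/\mathbb{Q}}(\theta_j)\mid R^2$, then by the canonical-divisor formalism of Section~\ref{ss:Canonical} its norm matches one of the non-trivial principal-factor forms of Equation~\eqref{eqn:Norms}, so that $(\theta_j)$ is a primitive ambiguous principal ideal and $\theta_j$ is an absolute principal factor, whence $L$ is of type $\beta$ by \cite[Thm.~2.1]{AMITA}. Restricting the index range to a single primitive period is legitimate by Lemma~\ref{lem:Periodicity}: the norms repeat with period $\ell$, and the excluded endpoints $j=0$ and $j=\ell$ carry the units $1$ and $\varepsilon$, both of norm $1$.

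For the converse---that a type-$\beta$ field is actually \emph{detected} by the chain---the three species diverge, and this is where the decisive innovations enter. In species $1\mathrm{a}$ the unconditional criterion~(1) of Theorem~\ref{thm:PrincipalFactorMinima} guarantees that any absolute principal factor $\alpha$ itself lies in $\mathrm{Min}(\mathcal{O}_{L})$, so a type-$\beta$ field is always flagged by some $\theta_j$ with $N_{L/\mathbb{Q}}(\theta_j)\mid R^2$; this proves (1b), and (1c) follows by complementation with $Q=3$. In species $1\mathrm{b}$ the principal factor may escape $\mathrm{Min}(\mathcal{O}_{L})$ when $v\ge 1$ (the M0-phenomenon), but here type $\gamma$ is impossible, so $Q=3$ already forces type $\beta$; the disjunction in (2b) therefore captures type $\beta$ correctly even in those exceptional cases. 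In species $2$ the same escape can occur for the maximal order, and it is circumvented by passing to the non-maximal order $\mathcal{O}_{L,0}$ with conductor $\mathfrak{l}^{\sigma}\mathfrak{l}$: the unconditional criterion~(3) of Theorem~\ref{thm:PrincipalFactorMinima} restores the guarantee $\alpha\in\mathrm{Min}(\mathcal{O}_{L,0})$, so detection through the chain $\Phi$ is infallible, yielding (3b), and (3c) is again the complement intersected with $Q=3$.

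The hard part will be the second-species argument, namely verifying that the classical minimality proof of \cite[Thm.~2, p.~1427]{HCW1980}, originally written for the maximal order under the hypothesis $d\not\equiv\pm 1\,(\mathrm{mod}\,9)$, survives verbatim for the order $\mathbb{Z}\oplus\mathbb{Z}\delta\oplus\mathbb{Z}\bar{\delta}$ without ever invoking that incongruence---so that it remains valid precisely when $d\equiv\pm 1\,(\mathrm{mod}\,9)$. Once this is secured the non-maximal chain $\Phi$ registers every absolute principal factor, the M0-defects of the maximal order are sidestepped, and the three species assemble into the stated criteria.
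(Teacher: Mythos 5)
Your proposal is correct and follows essentially the same route as the paper's proof: the equivalence type $\alpha\Longleftrightarrow Q=1$ taken from \cite[Rmk.~2.1]{AMITA}, the unconditional criteria (1) and (3) of Theorem~\ref{thm:PrincipalFactorMinima} to guarantee that every absolute principal factor is detected in $\Theta$ for species $1\mathrm{a}$ and in $\Phi$ for species $2$, and the impossibility of type $\gamma$ for species $1\mathrm{b}$ so that $Q=3$ alone settles type $\beta$ there. The only cosmetic difference is that the paper justifies this last impossibility directly (the conductor $f$ of a species-$1\mathrm{b}$ field must contain a prime $\ell\equiv\pm 2,\pm 4\,(\mathrm{mod}\,9)$, which obstructs a unit of relative norm $\zeta_3$), whereas you import it as a citation; your explicit remarks on the easy direction (norm dividing $R^2$ forces an ambiguous primitive principal ideal) and on reducing to one period via Lemma~\ref{lem:Periodicity} are consistent with what the paper leaves implicit.
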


\begin{remark}
\label{rmk:Algorithm}
This remarkable algorithm deserves several remarks.
\begin{enumerate}
\item
Our progressive innovation to use the non-maximal order
for the guaranteed detection of an absolute principal factor
is an incredibly powerful and easily implementable technique
which circumvents the error prone method of Williams in \cite[\S\ 4, pp. 268--271]{HCW1982}.
\item
Actually, we have used this algorithm to achieve
the extensive classification of all $827\,600$
fields with $d<10^6$, as described in \cite[Exm. 2.1]{AMITA}.
For more detailed statistics see Table \ref{tbl:Classification},
where column $B=15\,000$ is included with corrected results for \cite[\S\ 6, p. 272]{HCW1982}.
\item
For item 2.(b) of Theorem \ref{thm:Algorithm},
$Q=3$ alone would be sufficient,
but the determination of $Q$ requires the fundamental unit $\varepsilon=\theta_{\ell}$
at the end of the full period,
whereas usually a $\theta_j$ with $N_{L/\mathbb{Q}}(\theta_j)\mid R^2$
has a subscript $1\le j<\ell$ of approximate magnitude $\ell/3$ or $2\ell/3$,
and thus admits an earlier termination of the algorithm at a third or two thirds of the period.
\end{enumerate}
\end{remark}

%--------------------------------------------------------------------------------

\renewcommand{\arraystretch}{1.0}

\begin{table}[ht]
\caption{Distribution of principal factorization types for $2\le d\le B$}
\label{tbl:Classification}
\begin{center}
\begin{tabular}{|c||r|r|r|r|r|r|r|}
\hline
 bound $B$          & $10$ & $100$ & $1\,000$ & $10\,000$ & $15\,000$ & $100\,000$ & $1\,000\,000$ \\
\hline
 $\#$ type $\alpha$ &  $1$ &  $19$ &    $182$ &  $1\,753$ &  $2\,606$ &  $16\,935$ &    $163\,527$ \\
 $\#$ type $\beta$  &  $4$ &  $49$ &    $556$ &  $5\,988$ &  $9\,058$ &  $62\,068$ &    $635\,463$ \\
 $\#$ type $\gamma$ &  $1$ &   $6$ &     $50$ &     $381$ &     $556$ &   $3\,261$ &     $28\,610$ \\
\hline
 $\#$ total         &  $6$ &  $74$ &    $788$ &  $8\,122$ & $12\,220$ &  $82\,264$ &    $827\,600$ \\
\hline
\end{tabular}
\end{center}
\end{table}

%-----------------------------------------------------------------------

\begin{proof}
The equivalence of type $\alpha$ with a subfield unit index $Q=1$
is true independently of the Dedekind species,
according to \cite[Eqn. (5) in Rmk. 2.1]{AMITA}.
For the other two types $\beta$ and $\gamma$, where $Q=3$ for both, we distinguish the species.
\begin{enumerate}
\item
For species $1\mathrm{a}$, $d\equiv 0,\pm 3\,(\mathrm{mod}\,9)$,
the unconditional criterion 1 in Theorem \ref{thm:PrincipalFactorMinima}
proves that a non-unit $\alpha$ with norm $n=N_{L/\mathbb{Q}}(\alpha)$ dividing $R^2$
must occur as a lattice minimum $\alpha=\theta_j$ in the chain $\Theta$ of the maximal order $\mathcal{O}_{L}$.
Thus the occurrence of such a $\theta_j$ is equivalent with type $\beta$.
The lack of such a $\theta_j$ implies type $\alpha$ or $\gamma$
and type $\alpha$ must be discouraged by $Q=3$.
\item
A necessary condition for type $\gamma$,
that is, the occurrence of a unit $Z\in E_{\mathit{k}}$ such that $N_{\mathit{k}/\mathit{k}_0}(Z)=\zeta_3$,
is that the conductor $f$ of $\mathit{k}/\mathit{k}_0$ is divisible only by $3$ or primes $\ell\equiv\pm 1\,(\mathrm{mod}\,9)$.
For species $1\mathrm{b}$, $d\equiv\pm 2,\pm 4\,(\mathrm{mod}\,9)$,
there must exist a prime divisor $\ell\equiv\pm 2,\pm 4\,(\mathrm{mod}\,9)$ of $f$
and type $\gamma$ is impossible.
Therefore, type $\beta$ is equivalent with $Q=3$,
and only for accelerating the algorithm it is worth while to check
the possible occurrence of a lattice minimum with norm dividing $R^2$.
\item
For species $2$, $d\equiv\pm 1\,(\mathrm{mod}\,9)$,
the unconditional criterion 3 in Theorem \ref{thm:PrincipalFactorMinima}
shows that a non-unit $\alpha$ with norm $n=N_{L/\mathbb{Q}}(\alpha)$ dividing $R^2$
must occur as a lattice minimum $\alpha=\phi_j$ in the chain $\Phi$ of the non-maximal order $\mathcal{O}_{L,0}$.
Therefore the occurrence of such a $\phi_j$ is equivalent with type $\beta$.
The lack of such a $\phi_j$ enforces type $\alpha$ or $\gamma$
and type $\alpha$ must be eliminated by $Q=3$.
(Note that $\alpha$ is coprime to the conductor $\mathfrak{l}^\sigma\mathfrak{l}$ \cite{Dk}.)\qedhere
\end{enumerate}
\end{proof}

%\newpage

%-----------------------------------------------------------------------

\section{Explicit criteria for M0-fields in rational integers}
\label{ss:M0Fields}

\noindent
It is useful to specialize the general Theorem \ref{thm:PrincipalFactorMinima}
to situations, where the occurrence of a principal factor among the lattice minima
can be characterized in terms of the canonical divisors $d_1,\ldots,d_6$.
The most convenient situation appears for a squarefree radicand
$d=d_1d_2d_3$, where $d_4=d_5=d_6=1$, a priori.

\begin{theorem}
\label{thm:SquareFreeSpec2}
Let the squarefree radicand $d=d_1d_2d_3$ be of second species, $d\equiv\pm 1\,(\mathrm{mod}\,9)$,
and assume there exists a principal factor $\alpha\in\mathcal{O}_{L}$
with norm $n=d_1d_2^2$, minimal in the first coset $\lbrace d_1d_2^2,d_1^2d_3,d_2d_3^2\rbrace$, that is
\begin{equation}
\label{eqn:M0FieldSpec2Coarse}
d_2^2<d_1d_3, \quad d_1d_2<d_3^2.
\end{equation}
\begin{itemize}
\item
If $d_1^2<d_2d_3$, then $\bar{n}=d_1^2d_2$ is minimal in the second coset $\lbrace d_1^2d_2,d_2^2d_3,d_1d_3^2\rbrace$,
and $L$ is an $\mathrm{M}0$-field (neither $\alpha\in\mathrm{Min}(\mathcal{O}_{L})$ nor $\bar{\alpha}\in\mathrm{Min}(\mathcal{O}_{L})$), if
\begin{equation}
\label{eqn:M0FieldSpec2Var1}
\begin{aligned}
\text{either } & d_1\equiv d_2\equiv -d_3\,(\mathrm{mod}\,3), \quad d_3\le 2\min(d_1,d_2) \\
\text{or }     & d_1\equiv -d_2\equiv d_3\,(\mathrm{mod}\,3), \quad d_3\le\min(\sqrt{6}d_1,2d_2) \\
\text{or }     & -d_1\equiv d_3\equiv d_2\,(\mathrm{mod}\,3), \quad d_3\le\min(2d_1,\sqrt{6}d_2).
\end{aligned}
\end{equation}
\item
If $d_2d_3<d_1^2$, then $\bar{n}=d_2^2d_3$ is minimal in the second coset $\lbrace d_2^2d_3,d_1d_3^2,d_1^2d_2\rbrace$,
and $L$ is an $\mathrm{M}0$-field (neither $\alpha\in\mathrm{Min}(\mathcal{O}_{L})$ nor $\bar{\alpha}\delta/d_1\in\mathrm{Min}(\mathcal{O}_{L})$), if
\begin{equation}
\label{eqn:M0FieldSpec2Var2}
\begin{aligned}
\text{either } & d_1\equiv d_2\equiv -d_3\,(\mathrm{mod}\,3), \quad d_1\le\sqrt{6}d_2, \quad d_3\le 2d_2 \\
\text{or }     & d_1\equiv -d_2\equiv d_3\,(\mathrm{mod}\,3), \quad \max(d_1,d_3)\le 2d_2 \\
\text{or }     & -d_1\equiv d_3\equiv d_2\,(\mathrm{mod}\,3), \quad d_1\le 2d_2, \quad d_3\le\sqrt{6}d_2.
\end{aligned}
\end{equation}
\end{itemize}
\end{theorem}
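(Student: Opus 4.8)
The plan is to derive the $\mathrm{M}0$-property of Definition \ref{dfn:M0Fields} from the coarse sufficient condition \eqref{eqn:Coarse3} of Corollary \ref{cor:PrincipalFactorMinima}, applied once to each of the two non-trivial cosets of the decomposition \eqref{eqn:Cosets}. First I would specialize all quantities to the squarefree second-species case $d=d_1d_2d_3$, where $d_4=d_5=d_6=1$: the coset norms \eqref{eqn:Norms} collapse to $\{d_1d_2^2,d_1^2d_3,d_2d_3^2\}$ and $\{d_1^2d_2,d_2^2d_3,d_1d_3^2\}$, the normalized radicals of Theorem \ref{thm:PrincipalFactorMinima} become $\gamma=\delta/d_2$ and $\bar\gamma=\delta^2/(d_1d_2)$ with $\gamma^3=d_1d_3/d_2^2$, $\bar\gamma^3=d_3^2/(d_1d_2)$, and $y=\gamma\bar\gamma=d/(d_1d_2^2)=d_3/d_2$. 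In this notation the minimality hypothesis \eqref{eqn:M0FieldSpec2Coarse} is exactly the requirement $\gamma>1$ and $\bar\gamma>1$ of Theorem \ref{thm:PrincipalFactorMinima}, and a short arithmetic check shows that the extra hypothesis $d_1^2<d_2d_3$, respectively $d_2d_3<d_1^2$, makes $d_1^2d_2$, respectively $d_2^2d_3$, the smallest norm of the second coset, establishing the two preliminary minimality assertions.

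The second step is to reduce the $\mathrm{M}0$-property to the non-minimality of a single representative per non-trivial coset. The two non-minimal-norm representatives of the first coset in \eqref{eqn:Cosets} are $\alpha\gamma$ and $\alpha\bar\gamma$; since $\gamma,\bar\gamma>1$ and, being cube roots of their own norms, satisfy $\gamma'\gamma''=\gamma^2>1$ and $\bar\gamma'\bar\gamma''=\bar\gamma^2>1$, the computation in the proof of Lemma \ref{lem:Units} applies verbatim with $\delta$ replaced by $\gamma$ or $\bar\gamma$ and shows that $\psi(\alpha\varepsilon^n)$ lies in the norm cylinder $\mathcal{N}(\psi(\alpha\varepsilon^n\gamma))$, respectively $\mathcal{N}(\psi(\alpha\varepsilon^n\bar\gamma))$, for every $n$. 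Hence those two classes contain no lattice minimum, and the first coset contributes a minimum if and only if $\alpha\in\mathrm{Min}(\mathcal{O}_L)$; the same argument handles the second coset (and, applied with multiplier $\delta$ itself, re-proves that the radical classes of the trivial subgroup contribute nothing). Thus $L$ is an $\mathrm{M}0$-field as soon as the minimal-norm representatives $\alpha$ and $\bar\alpha$, respectively $\bar\alpha\delta/d_1$, are both excluded.

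For the first coset I would invoke \eqref{eqn:Coarse3} directly. With $u_1\equiv d_1d_3$ and $u_2\equiv d_1d_2\pmod 3$, the three congruence lines of \eqref{eqn:M0FieldSpec2Var1} and of \eqref{eqn:M0FieldSpec2Var2} correspond bijectively to $(u_1,u_2)=(-1,1),(1,-1),(-1,-1)$, covering all pairs except the forbidden $(1,1)$. Since $y=d_3/d_2$, the hypothesis of \eqref{eqn:Coarse3} reads $d_3/d_2\le B(-u_1u_2)$, which by \eqref{eqn:Bound3} is $d_3\le 2d_2$ when $u_1\ne u_2$ and $d_3\le\sqrt6\,d_2$ when $u_1=u_2=-1$, precisely the $d_2$-bound appearing in each line, giving $\alpha\notin\mathrm{Min}(\mathcal{O}_L)$. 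For the second coset I would reinterpret its minimal representative as a first-coset factor for a permuted triple of canonical divisors of the pair $(d,\bar n)$: a valuation count in \eqref{eqn:Canonical} yields the permutation $(d_1,d_2,d_3)\mapsto(d_2,d_1,d_3)$ in the first bullet and $(d_1,d_2,d_3)\mapsto(d_3,d_2,d_1)$ in the second. Under this relabeling the excess over $1$ of the transformed radicals is guaranteed by the bullet hypothesis together with \eqref{eqn:M0FieldSpec2Coarse}, the new invariant $y$ becomes $d_3/d_1$, respectively $d_1/d_2$, and the product $-u_1u_2$ transforms into $-u_1$, respectively $-u_2$. A second application of \eqref{eqn:Coarse3} then produces exactly the complementary bound of each line ($d_3\le 2d_1$ or $\sqrt6\,d_1$ in \eqref{eqn:M0FieldSpec2Var1}; $d_1\le\sqrt6\,d_2$ or $2d_2$ in \eqref{eqn:M0FieldSpec2Var2}), forcing the second representative out of $\mathrm{Min}(\mathcal{O}_L)$ and completing the $\mathrm{M}0$-conclusion.

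The hard part will be the bookkeeping in this second-coset relabeling. One must verify that the valuation count of \eqref{eqn:Canonical} indeed permutes $d_1,d_2,d_3$ as claimed, that the permuted radicals still exceed $1$ exactly under the bullet inequality $d_1^2<d_2d_3$, respectively $d_2d_3<d_1^2$, and, most delicately, that the congruence classes $(u_1,u_2)$ track through the permutation so that the sign of $-u_1u_2$ lands on the correct branch of $B(\cdot)$ in \eqref{eqn:Bound3}, matching the $\sqrt6$-versus-$2$ pattern of \eqref{eqn:M0FieldSpec2Var1} and \eqref{eqn:M0FieldSpec2Var2}. Once these transformations are checked, every remaining step is a direct substitution into \eqref{eqn:Coarse3}.
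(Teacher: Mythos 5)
Your proposal follows essentially the same route as the paper's own proof: both reduce the claim to two applications of the coarse sufficient condition \eqref{eqn:Coarse3} of Corollary \ref{cor:PrincipalFactorMinima}, one per non-trivial coset, with the second application performed after relabeling the canonical divisors exactly as you describe ($c_1=d_2$, $c_2=d_1$ in the first bullet; $c_1=d_3$, $c_3=d_1$ in the second), and your computations of $y=d_3/d_2$, $d_3/d_1$, $d_1/d_2$ and of the congruence-to-$(u_1,u_2)$ dictionary match the paper's. The only difference is that you additionally justify why only the minimal-norm representative of each coset needs to be tested (via the Lemma \ref{lem:Units} argument), a point the paper leaves implicit; this is a harmless and correct supplement.
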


%-----------------------------------------------------------------------

\begin{proof}
The claim concerns both non-trivial cosets of principal factors,
the first coset of $\alpha$ with norm $n=d_1d_2^2$
and the second coset of $\bar{\alpha}=\alpha^2/d_2$, respectively $\bar{\alpha}\delta/d_1$, with norm $\bar{n}$.

First, we consider the coset of $\alpha$.
Here, we have the congruence invariants
$u_1\equiv d_1d_3\,(\mathrm{mod}\,3)$, $u_2\equiv d_1d_2\,(\mathrm{mod}\,3)$,
the normalized radicals
$\gamma=\sqrt[3]{d_1d_2d_3}/d_2>1$, $\bar{\gamma}=\sqrt[3]{d_1^2d_2^2d_3^2}/d_1d_2>1$,
and their product
$y=\gamma\bar{\gamma}=(\sqrt[3]{d_1d_2d_3}/d_2)(\sqrt[3]{d_1^2d_2^2d_3^2}/d_1d_2)=d_3/d_2$.
The minimality of $n$ in its coset yields relations between the magnitude of the canonical divisors,
$d_2^2<d_1d_3$ and $d_1d_2<d_3^2$, that is, formula \eqref{eqn:M0FieldSpec2Coarse}.

We exploit the coarse sufficient condition in Corollary \ref{cor:PrincipalFactorMinima}:
$y\le B(-u_1u_2)$ $\Longrightarrow$ \\
$\alpha\not\in\mathrm{Min}(\mathcal{O}_{L})$,
that is, $d_3\le\sqrt{6}\cdot d_2$ if $u_1=u_2=-1$, and $d_3\le 2\cdot d_2$ otherwise.
The connection between the congruence invariants and the residue class of the canonical divisors is given by
the forbidden case $d_1\equiv d_2\equiv d_3\,(\mathrm{mod}\,3)$ $\Longleftrightarrow$ $(u_1,u_2)=(1,1)$,
and the admissible cases \\
$d_1\equiv d_2\equiv -d_3\,(\mathrm{mod}\,3)$ $\Longleftrightarrow$ $(u_1,u_2)=(-1,1)$, \\
$d_1\equiv -d_2\equiv d_3\,(\mathrm{mod}\,3)$ $\Longleftrightarrow$ $(u_1,u_2)=(1,-1)$, \\
$-d_1\equiv d_2\equiv d_3\,(\mathrm{mod}\,3)$ $\Longleftrightarrow$ $(u_1,u_2)=(-1,-1)$.

For the second coset, we have to split the investigation.
\begin{itemize}
\item
If $d_1^2<d_2d_3$, then the minimal norm is $\bar{n}=d_1^2d_2$.
with new canonical invariants $\bar{n}=c_1c_2^2$, where
$c_1:=d_2$ and $c_2:=d_1$ are twisted, whereas $c_3=d_3$ remains fixed.

The connection between the congruence invariants and the residue class of the canonical divisors is given by \\
$d_1\equiv d_2\equiv -d_3\,(\mathrm{mod}\,3)$ $\Longleftrightarrow$ $c_1\equiv c_2\equiv -c_3\,(\mathrm{mod}\,3)$ $\Longleftrightarrow$ $(u_1,u_2)=(-1,1)$, \\
$d_1\equiv -d_2\equiv d_3\,(\mathrm{mod}\,3)$ $\Longleftrightarrow$ $-c_1\equiv c_2\equiv c_3\,(\mathrm{mod}\,3)$ $\Longleftrightarrow$ $(u_1,u_2)=(-1,-1)$, \\
$-d_1\equiv d_2\equiv d_3\,(\mathrm{mod}\,3)$ $\Longleftrightarrow$ $c_1\equiv -c_2\equiv c_3\,(\mathrm{mod}\,3)$ $\Longleftrightarrow$  $(u_1,u_2)=(1,-1)$.

Again, we employ the coarse sufficient condition in Corollary \ref{cor:PrincipalFactorMinima}:
$y\le B(-u_1u_2)$ $\Longrightarrow$ $\bar{\alpha}\not\in\mathrm{Min}(\mathcal{O}_{L})$,
that is, $d_3=c_3\le\sqrt{6}\cdot c_2=\sqrt{6}\cdot d_1$ if $u_1=u_2=-1$, and $d_3=c_3\le 2\cdot c_2=2\cdot d_1$ otherwise.
\item
If $d_2d_3<d_1^2$, then the minimal norm is $\bar{n}=d_2^2d_3$.
with new canonical invariants $\bar{n}=c_1c_2^2$, where
$c_1:=d_3$ and $c_3:=d_1$ are twisted, whereas $c_2=d_2$ remains fixed.

The connection between the congruence invariants and the residue class of the canonical divisors is given by \\
$d_1\equiv d_2\equiv -d_3\,(\mathrm{mod}\,3)$ $\Longleftrightarrow$ $-c_1\equiv c_2\equiv c_3\,(\mathrm{mod}\,3)$ $\Longleftrightarrow$ $(u_1,u_2)=(-1,-1)$, \\
$d_1\equiv -d_2\equiv d_3\,(\mathrm{mod}\,3)$ $\Longleftrightarrow$ $c_1\equiv -c_2\equiv c_3\,(\mathrm{mod}\,3)$ $\Longleftrightarrow$ $(u_1,u_2)=(1,-1)$, \\
$-d_1\equiv d_2\equiv d_3\,(\mathrm{mod}\,3)$ $\Longleftrightarrow$ $c_1\equiv c_2\equiv -c_3\,(\mathrm{mod}\,3)$ $\Longleftrightarrow$  $(u_1,u_2)=(-1,1)$.

Again, we employ the coarse sufficient condition in Corollary \ref{cor:PrincipalFactorMinima}:
$y\le B(-u_1u_2)$ $\Longrightarrow$ $\bar{\alpha}\delta/d_1\not\in\mathrm{Min}(\mathcal{O}_{L})$,
that is, $d_1=c_3\le\sqrt{6}\cdot c_2=\sqrt{6}\cdot d_2$ if $u_1=u_2=-1$, and $d_1=c_3\le 2\cdot c_2=2\cdot d_2$ otherwise.
\end{itemize}
Finally we collect all required inequalities for the first and second non-trivial coset,
and we must make sure that not $u_1=u_2=1$,
which is the case if not $d_1\equiv d_2\equiv d_3\,(\mathrm{mod}\,3)$.
\end{proof}

\noindent
Theorem \ref{thm:SquareFreeSpec2} gives rise to the following hypothesis,
since the assumptions for the three positive integers $d_1,d_2,d_3$
in form of simple inequalities and simple congruences modulo $3$
seem to be satisfiable even by infinitely many triples $(d_1,d_2,d_3)\in\mathbb{P}^3$ of prime numbers.

\begin{conjecture}
\label{cnj:SquareFreeSpec2}
There exist infinitely many squarefree radicands $d$ of second species
such that $L=\mathbb{Q}(\sqrt[3]{d})$ is an $\mathrm{M}0$-field.
\end{conjecture}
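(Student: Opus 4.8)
The plan is to reduce Conjecture~\ref{cnj:SquareFreeSpec2} to the production of infinitely many prime triples satisfying the hypotheses of Theorem~\ref{thm:SquareFreeSpec2}, and then to split the required conditions into an \emph{analytic} part (congruences and size inequalities), handled by prime number theory in arithmetic progressions, and an \emph{arithmetic} part (existence of the prescribed principal factor), governed by cubic reciprocity and the ambiguous class number formula. Concretely, I would seek squarefree radicands $d=p_1p_2p_3$ with $d_1=p_1$, $d_2=p_2$, $d_3=p_3$ distinct primes such that: (i) $d\equiv\pm 1\,(\mathrm{mod}\,9)$, so that $L$ is of second species; (ii) the ambiguous ideal $\mathfrak{p}_1\mathfrak{p}_2^2$, where $\mathfrak{p}_i^3=(p_i)$, is principal, furnishing a principal factor $\alpha$ of norm $n=p_1p_2^2$; (iii) the minimality inequalities $p_2^2<p_1p_3$ and $p_1p_2<p_3^2$ of \eqref{eqn:M0FieldSpec2Coarse} hold; and (iv) one of the three admissible congruence-and-inequality patterns of \eqref{eqn:M0FieldSpec2Var1} or \eqref{eqn:M0FieldSpec2Var2} is met. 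Once such triples are exhibited, Theorem~\ref{thm:SquareFreeSpec2} immediately yields that $L=\mathbb{Q}(\sqrt[3]{d})$ is an $\mathrm{M}0$-field.

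For the analytic part I would fix a single residue pattern, say $p_1\equiv p_2\equiv -p_3\,(\mathrm{mod}\,3)$, and prescribe residues of $p_1,p_2,p_3$ modulo $9$ whose product is $\equiv\pm 1\,(\mathrm{mod}\,9)$, choosing at least one residue $\equiv\pm 2,\pm 4\,(\mathrm{mod}\,9)$; by the necessary condition for type $\gamma$ in the proof of Theorem~\ref{thm:Algorithm} this excludes type $\gamma$ outright. The size inequalities in (iii) and (iv) depend only on the ratios $p_i/p_j$, so it suffices to place $p_1,p_2,p_3$ in three fixed dyadic windows of a common scale $N\to\infty$, arranged so that the prescribed ratio conditions (for instance $p_3\le 2\min(p_1,p_2)$ and $p_1^2<p_2p_3$) hold throughout the windows. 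By the Siegel--Walfisz theorem each such window of fixed relative width contains, for $N$ large, the expected positive proportion of primes in any prescribed residue class modulo $9$, which produces infinitely many triples meeting (i), (iii), (iv).

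The crux is condition (ii): the principality of $\mathfrak{p}_1\mathfrak{p}_2^2$, and hence the existence of a principal factor in the prescribed first coset. Here I would invoke the genus- and ambiguous-class-number machinery for $\mathit{k}/\mathit{k}_0$ used in \cite{AMI} following Gerth \cite{Ge1975,Ge1976}: the classes of the ramified primes $\mathfrak{p}_1,\mathfrak{p}_2,\mathfrak{p}_3$ satisfy relations in the $3$-class group encoded by the cubic residue symbols $\left(\tfrac{p_i}{p_j}\right)_3$, and the triviality of $[\mathfrak{p}_1\mathfrak{p}_2^2]$ translates into a prescribed pattern of these symbols together with $Q=3$ (which excludes type $\alpha$). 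Since the cubic residue symbols are controlled by Frobenius elements in the Kummer extensions $\mathbb{Q}(\zeta_3,\sqrt[3]{p_j})$, which are linearly disjoint from the cyclotomic field $\mathbb{Q}(\zeta_9)$ governing the residues modulo $9$, I expect the splitting conditions of (ii) to be independent of those of the analytic part, so that a joint equidistribution argument (a multidimensional Chebotarev statement applied prime by prime, or a prime-tuple count in the relevant compositum) retains a positive proportion of admissible triples.

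The main obstacle is exactly this independence-and-positivity step made rigorous: one must show that imposing the cubic-reciprocity pattern forcing $[\mathfrak{p}_1\mathfrak{p}_2^2]=1$ does not conflict with the congruence and size windows, and---more delicately---that the resulting principal factor has norm exactly $d_1d_2^2=p_1p_2^2$ and lies in the first coset (rather than yielding a conjugate ideal, a $3$-class group of higher rank, or a field of type $\alpha$). Controlling the precise coset and the primitivity of the ambiguous ideal for infinitely many $d$ is a genus-theoretic statement which, although strongly suggested by the numerical evidence behind \eqref{eqn:Missing} and Theorem~\ref{thm:WilliamsErrors}, appears to require either an explicit parametrized family of triples for which $\alpha$ can be written down directly, or a full Chebotarev equidistribution over $\mathbb{Q}(\zeta_9,\sqrt[3]{p_1},\sqrt[3]{p_2},\sqrt[3]{p_3})$; the latter is the technical heart and the reason the statement remains, for the present, only conjectural.
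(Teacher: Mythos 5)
First, a point of calibration: the statement you are proving is labelled a \emph{Conjecture} in the paper, and the paper offers no proof of it --- only the one-sentence heuristic preceding it, namely that the hypotheses of Theorem~\ref{thm:SquareFreeSpec2} (a few inequalities and congruences modulo $3$ on $d_1,d_2,d_3$) ``seem to be satisfiable'' by infinitely many prime triples. Your proposal is therefore not in conflict with anything in the paper; it is a considerably more detailed roadmap than the authors supply, it reduces the conjecture to Theorem~\ref{thm:SquareFreeSpec2} exactly as they intend, and your analytic step (prescribing residues modulo $9$ and placing $p_1,p_2,p_3$ in fixed multiplicative windows so that \eqref{eqn:M0FieldSpec2Coarse} and one of \eqref{eqn:M0FieldSpec2Var1}, \eqref{eqn:M0FieldSpec2Var2} hold for a positive proportion of triples) is routine and correct.

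The genuine gap is the one you yourself flag, and it is worth stating precisely why it does not close. Theorem~\ref{thm:SquareFreeSpec2} presupposes a principal factor $\alpha$ of norm exactly $n=d_1d_2^2$; equivalently, $L$ must be of type $\beta$ \emph{and} the labelling of the three primes forced by the canonical-divisor formalism must land in the configuration your windows were designed for. Excluding type $\gamma$ by choosing one prime $\not\equiv\pm 1\pmod 9$ is fine (as in the proof of Theorem~\ref{thm:Algorithm}), but excluding type $\alpha$ amounts to forcing $Q=3$, a condition on the unit index that is not a Chebotarev condition in any fixed number field: the governing field $\mathbb{Q}(\zeta_9,\sqrt[3]{p_1},\sqrt[3]{p_2},\sqrt[3]{p_3})$ varies with the triple, and a pattern of cubic residue symbols $\left(\frac{p_i}{p_j}\right)_3$ determines at best the rank of the ambiguous class group, not which specific ambiguous ideal class $[\mathfrak{p}_1\mathfrak{p}_2^2]$ capitulates to a principal factor, nor in which coset of \eqref{eqn:Cosets} the resulting norm sits. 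No known equidistribution theorem delivers a positive proportion of triples with this principalization behaviour prescribed. So your proposal correctly isolates the obstruction but does not prove the statement --- which is the honest outcome, since the statement is open and the paper proves nothing here either.
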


%-----------------------------------------------------------------------

\begin{example}
\label{exm:SquareFreeSpec2Var1}
We prove two defects in \cite[\S\ 6, Tbl. 2, p. 273]{HCW1982},
as claimed in Theorem \ref{thm:WilliamsErrors},
both of species $2$, $d\equiv\pm 1\,(\mathrm{mod}\,9)$.
They can be treated by the first variant of Theorem \ref{thm:SquareFreeSpec2}.
\begin{itemize}
\item
Let $d=1\,430=2\cdot 5\cdot 11\cdot 13$ and $n=1\,100=2^2\cdot 5^2\cdot 11$.
Then $d_1=11$, $d_2=2\cdot 5=10$, $d_3=13$,
and \eqref{eqn:M0FieldSpec2Coarse} is satisfied with
$d_1d_3=11\cdot 13=143>100=10^2=d_2^2$, $d_3^2=13^2=169>110=11\cdot 10=d_1d_2$, $d_2d_3=10\cdot 13=130>121=11^2=d_1^2$.
Furthermore, \eqref{eqn:M0FieldSpec2Var1} is satisfied with
$-d_1=-11\equiv d_2=10\equiv d_3=13\,(\mathrm{mod}\,3)$, $d_3=13<22=2\cdot 11=2d_1$, $d_3=13<24.49\approx 2.449\cdot 10\approx \sqrt{6}d_2$.
Therefore, $L=\mathbb{Q}(\sqrt[3]{1\,430})$ is an $\mathrm{M}0$-field.
\item
Let $d=12\,673=19\cdot 23\cdot 29$ and $n=10\,051=19\cdot 23^2$.
Then $d_1=19$, $d_2=23$, $d_3=29$,
and \eqref{eqn:M0FieldSpec2Coarse} is satisfied with
$d_1d_3=19\cdot 29=551>529=23^2=d_2^2$, $d_3^2=29^2=841>437=19\cdot 23=d_1d_2$, $d_2d_3=23\cdot 29=667>361=19^2=d_1^2$.
Furthermore, \eqref{eqn:M0FieldSpec2Var1} is satisfied with
$-d_1=-19\equiv d_2=23\equiv d_3=29\,(\mathrm{mod}\,3)$, $d_3=29<38=2\cdot 19=2d_1$, $d_3=29<56.34\approx 2.449\cdot 23\approx\sqrt{6}d_2$.
Consequently, $L=\mathbb{Q}(\sqrt[3]{12\,673})$ is an $\mathrm{M}0$-field.
We point out that this radicand is of the fifth form in the Main Theorem \cite[Thm. 1.1]{AMI}.
\end{itemize}
\end{example}

%-----------------------------------------------------------------------

\begin{example}
\label{exm:1430}
Up to now, no examples of $\mathrm{M}0$-fields of species $2$ were known.
Since $d=1\,430$ was the first discovered radicand of such an exotic field $L=\mathbb{Q}(\sqrt[3]{d})$,
we present some details of the actual execution of Voronoi's algorithm.
The procedure starts at the trivial unit $\theta_0=1$, respectively $\phi_0=1$, and constructs the chain of lattice minima,
$\Theta$ of the maximal order $\mathcal{O}_{L}$, respectively $\Phi$ of the non-maximal order $\mathcal{O}_{L,0}$,
in direction of decreasing height $h=z$ and increasing radius $r=\sqrt{x^2+y^2}$ in Minkowski signature space $\mathbb{R}^3$,
and stops at the inverse fundamental unit $0<\theta_{-\ell}=\varepsilon^{-1}<1$, respectively $0<\phi_{-\ell_0}=\varepsilon_0^{-1}<1$,
as illustrated in Figure \ref{fig:LatticeMinima}.
In this particular example the unit groups of maximal order and suborder coincide and $\varepsilon_0=\varepsilon$.
Before the period ended at length $\ell_0=48$ we found two principal factors at characteristic locations
$j=-16=\frac{1}{3}\cdot (-48)$ exactly and $j=-34\approx\frac{2}{3}\cdot (-48)$ approximately:
\begin{equation}
\label{eqn:Minima1430}
\begin{aligned}
\beta:=\phi_{-16} &= -28\,490-13\,120\delta+1\,389\bar{\delta}, \\
\alpha:=\phi_{-34} &= -5\,130\,804\,470+350\,650\,663\delta+9\,298\,918\bar{\delta}, \\
\varepsilon_0=\phi_{-48} &= -6\,074\,553\,925\,441-689\,057\,082\,849\delta+109\,019\,548\,011\bar{\delta}.
\end{aligned}
\end{equation}
For instance the norm of $\beta=x+y\delta+z\bar{\delta}$ can be computed with the homogeneous pure cubic norm form
$N(\beta)=x^3+d\cdot y^3+d^2\cdot z^3-3d\cdot xyz$
\begin{equation*}
\label{eqn:Norm1430}
\begin{aligned}
&= -28\,490^3+1\,430\cdot (-13\,120^3)+2\,044\,900\cdot 1\,389^3-3\cdot1\,430\cdot (-28\,490)\cdot (-13\,120)\cdot 1\,389 \\
&= -23\,124\,766\,049\,000-3\,229\,516\,759\,040\,000+5\,479\,977\,964\,418\,100-2\,227\,336\,439\,328\,000 \\
&= 1\,100=2^2\cdot 5^2\cdot 11.
\end{aligned}
\end{equation*}
In Table \ref{tbl:OrdersCompared},
we compare the crucial locations in the chains of both orders.
By the general theory of principal factors, we have the characteristic relations
$\varepsilon_0^{-1}=\frac{\beta^3}{N(\beta)}$ and $\varepsilon_0^{-2}=\frac{\alpha^3}{N(\alpha)}$,
which shows that Voronoi's algorithm can be terminated at $\beta$ already, only a third of the period, to get the fundamental unit.
Of course, by Example \ref{exm:SquareFreeSpec2Var1}, we cannot find principal factors in the chain $\Theta$.
However, instead we encounter the \textit{shadows} of $\beta$ and $\alpha$ in the maximal order,
that is, the actual lattice minima within the norm cylinders of $\beta$ and $\alpha$:
\begin{equation}
\label{eqn:Shadows1430}
\begin{aligned}
\theta_{-17} &= \frac{1}{3}(56\,557 +28\,328\delta-2\,960\bar{\delta})
= \frac{1}{3}(-1+\frac{1}{10}\delta+\frac{1}{110}\bar{\delta})\cdot\phi_{-16}, \\
\theta_{-28} &= \frac{1}{3}(-112\,505\,639+13\,815\,812\delta+339\,929\bar{\delta}), \\
\theta_{-35} &= \frac{1}{3}(-8\,480\,403\,749-236\,672\,041\delta+87\,819\,928\bar{\delta})
= \frac{1}{3}(1+\frac{1}{11}\delta-\frac{1}{110}\bar{\delta})\cdot\phi_{-34}.
\end{aligned}
\end{equation}

\noindent
The shadow norms $N(\theta_{-17})=239$ and $N(\theta_{-35})=183$
can be computed with the results in \cite[\S\ 4, pp. 268--271]{HCW1982}. 
As opposed to the principal factor norms, the shadow norms are not unique,
and this fact causes complications, since for instance $\theta_{-28}$ with norm $183$
has nothing to do with principal factors, indicated by the symbol $\lightning$.

\renewcommand{\arraystretch}{1.0}

\begin{table}[ht]
\caption{First primitive periods of both orders compared}
\label{tbl:OrdersCompared}
\begin{center}
\begin{tabular}{|rrrc||rrr|}
\hline
 \multicolumn{4}{|c||}{maximal order $\mathcal{O}_{L}$}   & \multicolumn{3}{|c|}{non-maximal order $\mathcal{O}_{L,0}$}    \\
\hline
   $i$ &         $\theta_i$ & norm of $\theta_i$ &              &   $j$ &                                  $\phi_j$ & norm of $\phi_j$ \\
\hline
   $0$ &                $1$ &                $1$ &              &   $0$ &                                       $1$ &              $1$ \\
 $-17$ &     $\theta_{-17}$ &              $239$ & $\checkmark$ & $-16$ &  $\beta=\sqrt[3]{1100\varepsilon_0^{-1}}$ &           $1100$ \\
 $-28$ &     $\theta_{-28}$ &              $183$ & $\lightning$ &       &                                           &                  \\
 $-35$ &     $\theta_{-35}$ &              $183$ & $\checkmark$ & $-34$ & $\alpha=\sqrt[3]{1210\varepsilon_0^{-2}}$ &           $1210$ \\
 $-50$ & $\varepsilon^{-1}$ &                $1$ &              & $-48$ &                      $\varepsilon_0^{-1}$ &              $1$ \\
\hline
\end{tabular}
\end{center}
\end{table}

\noindent
Hence $L=\mathbb{Q}(\sqrt[3]{1\,430})$ is the first $\mathrm{M}0$-field of species $2$ and type $\beta$.
It has inadvertently been overlooked for some reason by H. C. Williams in \cite[Tbl. 2, p. 273]{HCW1982}.
\end{example}

%-----------------------------------------------------------------------

\begin{example}
\label{exm:SquareFreeSpec2Var2}
Outside of the range $d<15\,000$ of radicands in the computations of \cite[\S\ 6, Tbl. 2, p. 273]{HCW1982}
there also occur examples of the second variant of Theorem \ref{thm:SquareFreeSpec2}.
\begin{itemize}
\item
Let $d=33\,337=17\cdot 37\cdot 53$ and $n=15\,317=17^2\cdot 53$.
Then $d_1=53$, $d_2=17$, $d_3=37$,
and \eqref{eqn:M0FieldSpec2Coarse} is satisfied with
$d_1d_3=53\cdot 37=1961>289=17^2=d_2^2$, $d_3^2=37^2=1369>901=53\cdot 17=d_1d_2$, $d_2d_3=17\cdot 37=629<2809=53^2=d_1^2$.
Unfortunately, \eqref{eqn:M0FieldSpec2Var2} with
$d_1=53\equiv d_2=17\equiv -d_3=-37\,(\mathrm{mod}\,3)$ is not satisfied,
since both inequalities $d_1=53>41.64\approx 2.449\cdot 17\approx\sqrt{6}d_2$ and $d_3=37>34=2\cdot 17=2d_2$
are in the false direction so that the fine criteria of Theorem \ref{thm:PrincipalFactorMinima} must be applied.
However, the field is interesting for another reason, since all prime factors are $\equiv\pm 1\,(\mathrm{mod}\,9)$
and thus the multiplicity of the conductor $f=d$ is given by $m(f)=2^3\cdot X_{-1}=2^3\cdot\frac{1}{2}=4$
giving rise to one of the rare \textit{quartets} of second species.
\item
Let $d=52\,417=23\cdot 43\cdot 53$ and $n=22\,747=23^2\cdot 43$.
Then $d_1=43$, $d_2=23$, $d_3=53$,
and \eqref{eqn:M0FieldSpec2Coarse} is satisfied with
$d_1d_3=43\cdot 53=2279>529=23^2=d_2^2$, $d_3^2=53^2=2809>989=43\cdot 23=d_1d_2$, $d_2d_3=23\cdot 53=1219<1849=43^2=d_1^2$.
Furthermore, \eqref{eqn:M0FieldSpec2Var2} is satisfied with
$-d_1=-43\equiv d_2=23\equiv d_3=53\,(\mathrm{mod}\,3)$, $d_1=43<46=2\cdot 23=2d_2$, $d_3=53<56.34\approx 2.449\cdot 23\approx\sqrt{6}d_2$.
Consequently, $L=\mathbb{Q}(\sqrt[3]{52\,417})$ is an $\mathrm{M}0$-field.
We point out that this radicand is of the seventh form in  \cite[Thm. 1.1]{AMI}.
\end{itemize}
\end{example}

%-----------------------------------------------------------------------

\begin{theorem}
\label{thm:SquarePartSpec1B}
Let the square-part radicand $d=d_3d_4^2$ be of species $1\mathrm{a}$, $d\equiv\pm 2,\pm 4\,(\mathrm{mod}\,9)$,
and assume there exists a principal factor $\alpha\in\mathcal{O}_{L}$
with norm $n=9d_4$, minimal in the first coset $\lbrace 9d_4,9d_3,9d_3^2d_4^2\rbrace$, that is
\begin{equation}
\label{eqn:FieldSpec1BCoarse}
d_4<d_3,
\end{equation}
then $\bar{n}=3d_4^2$ is minimal in the second coset $\lbrace 3d_4^2,3d_3d_4,3d_3^2\rbrace$ and $\bar{\alpha}=\alpha^2/3$.
Denote by $Z_+$ the unique positive zero of the univariate polynomial $Q_4(X):=X^4+X^3+X-8\in\mathbb{Z}\lbrack X\rbrack$,
that is, $Z_+\approx 1.40080587094953$ with cube $Z_+^3\approx 2.74874124930414$.
Further, put $C_1:=(-1+\sqrt{33})/2\approx 2.37228132326901$ with cube $C_1^3\approx 13.3505319094211$.
Then,
\begin{itemize}
\item
$L$ is an $\mathrm{M}0$-field (neither $\alpha\in\mathrm{Min}(\mathcal{O}_{L})$ nor $\bar{\alpha}\in\mathrm{Min}(\mathcal{O}_{L})$)
$\Longleftrightarrow$
\begin{equation}
\label{eqn:M0FieldSpec1B}
d_3\equiv -d_4\,(\mathrm{mod}\,3), \quad d_3< Z_+^3\cdot d_4.
\end{equation}
\item
$L$ is an $\mathrm{M}1$-field ($\alpha\not\in\mathrm{Min}(\mathcal{O}_{L})$ but $\bar{\alpha}\in\mathrm{Min}(\mathcal{O}_{L})$)
$\Longleftrightarrow$
\begin{equation}
\label{eqn:M1FieldSpec1B}
\begin{aligned}
\text{either } & d_3\equiv -d_4\,(\mathrm{mod}\,3), \quad Z_+^3\cdot d_4\le d_3<8\cdot d_4 \\
\text{or }     & d_3\equiv d_4\,(\mathrm{mod}\,3), \quad d_3<C_1^3\cdot d_4. \\
\end{aligned}
\end{equation}
\item
$L$ is an $\mathrm{M}2$-field (both, $\alpha\in\mathrm{Min}(\mathcal{O}_{L})$ and $\bar{\alpha}\in\mathrm{Min}(\mathcal{O}_{L})$)
$\Longleftrightarrow$
\begin{equation}
\label{eqn:M2FieldSpec1B}
\begin{aligned}
\text{either } & d_3\equiv -d_4\,(\mathrm{mod}\,3), \quad 8\cdot d_4\le d_3 \\
\text{or }     & d_3\equiv d_4\,(\mathrm{mod}\,3), \quad C_1^3\cdot d_4\le d_3. \\
\end{aligned}
\end{equation}
\end{itemize}
\end{theorem}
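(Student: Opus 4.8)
The plan is to reduce the classification to the necessary-and-sufficient criteria of Theorem~\ref{thm:PrincipalFactorMinima}, applied separately to the two non-trivial cosets of principal factors, and then to read off the three regimes M0/M1/M2 from the resulting inequalities among $d_3,d_4$ and their residues modulo $3$. First I would record the canonical-divisor data. For the square-part radicand $d=d_3d_4^2$ one has $a=d_3$, $b=d_4$, hence $d_1=d_2=d_5=d_6=1$, and by \eqref{eqn:Norms} the first coset of norms is $\lbrace 9d_4,9d_3,9d_3^2d_4^2\rbrace$ (the factor $9=3^2$ reflecting $v=2$ and $R=3ab$), whose minimal element under \eqref{eqn:FieldSpec1BCoarse} is $n=9d_4$, realized by $\alpha$; the second coset $\lbrace 3d_4^2,3d_3d_4,3d_3^2\rbrace$ has minimal element $\bar{n}=3d_4^2$, realized by $\bar{\alpha}=\alpha^2/3$ (consistent with $N(\alpha^2)=81d_4^2$, $N(3)=27$). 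For the first coset the relevant data are $\gamma=\sqrt[3]{ab^2}/d_4=\sqrt[3]{d_3/d_4}$, $\bar{\gamma}=\sqrt[3]{a^2b}=\sqrt[3]{d_3^2d_4}$, with $u_1\equiv d_3d_4$ and $u_2\equiv d_4\,(\mathrm{mod}\,3)$. Recomputing the canonical divisors for the twisted norm $\bar{n}=3d_4^2$ (now $d_5=d_4$, $d_3$ silent, $v=1$) I would obtain for the second coset $\gamma'=\sqrt[3]{d_3/d_4}=\gamma$, $\bar{\gamma}'=\gamma^2$, product $y'=\gamma^3=d_3/d_4$, and $u_1'\equiv d_3d_4$, $u_2'\equiv 1\,(\mathrm{mod}\,3)$.

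Next I would treat the first coset with conditional criterion 2 ($v=2$) of Theorem~\ref{thm:PrincipalFactorMinima}: $\alpha\notin\mathrm{Min}(\mathcal{O}_L)\iff\min(\gamma/C(u_1),\bar{\gamma}/C(u_2))<1$. The point is that $\bar{\gamma}$ is always large here: from $d_3>d_4\ge 2$ one gets $\bar{\gamma}^3=d_3^2d_4\ge 18>C_1^3$, so $\bar{\gamma}>C_1\ge C(u_2)$ and the minimum is governed solely by $\gamma/C(u_1)$. Hence $\alpha\notin\mathrm{Min}(\mathcal{O}_L)\iff\gamma<C(u_1)\iff d_3<C(u_1)^3 d_4$, which by \eqref{eqn:Bound9} reads $d_3<C_1^3d_4$ when $u_1=1$ (i.e.\ $d_3\equiv d_4\,(\mathrm{mod}\,3)$) and $d_3<8d_4$ when $u_1=-1$ (i.e.\ $d_3\equiv -d_4\,(\mathrm{mod}\,3)$). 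The complementary inequalities characterize $\alpha\in\mathrm{Min}(\mathcal{O}_L)$ and are exactly the conditions \eqref{eqn:M2FieldSpec1B}.

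For the second coset I would invoke conditional criterion 1 ($v=1$). When $d_3\equiv d_4\,(\mathrm{mod}\,3)$ one has $(u_1',u_2')=(1,1)$, the excluded case, so $\bar{\alpha}\in\mathrm{Min}(\mathcal{O}_L)$ unconditionally; in particular M0 cannot occur in this residue regime. When $d_3\equiv -d_4\,(\mathrm{mod}\,3)$ one has $(u_1',u_2')=(-1,1)\neq(1,1)$, and the $P_4$-form of \eqref{eqn:Escalatory3} gives $\bar{\alpha}\notin\mathrm{Min}(\mathcal{O}_L)\iff P_4(-\gamma,\gamma^3)<0$. The key identity to establish is the factorization
\[
P_4(-\gamma,\gamma^3)=\gamma^6+\gamma^5+\gamma^3-8\gamma^2=\gamma^2\,Q_4(\gamma),\qquad Q_4(X)=X^4+X^3+X-8.
\]
Since $Q_4'(X)=4X^3+3X^2+1>0$ for $X>0$, the polynomial $Q_4$ is strictly increasing on $(0,\infty)$ with a single positive root $Z_+$, so $P_4(-\gamma,\gamma^3)<0\iff\gamma<Z_+\iff d_3<Z_+^3 d_4$. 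This is precisely the threshold separating \eqref{eqn:M0FieldSpec1B} from the first line of \eqref{eqn:M1FieldSpec1B}.

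Finally I would assemble the two analyses. In the regime $d_3\equiv d_4\,(\mathrm{mod}\,3)$, $\bar{\alpha}$ is always a minimum, so $L$ is M2 iff $\alpha\in\mathrm{Min}$ (i.e.\ $d_3\ge C_1^3 d_4$) and M1 otherwise, matching the second lines of \eqref{eqn:M2FieldSpec1B} and \eqref{eqn:M1FieldSpec1B}. In the regime $d_3\equiv -d_4\,(\mathrm{mod}\,3)$, since $Z_+^3\approx 2.75<8$, the three subintervals $d_3<Z_+^3d_4$, $Z_+^3d_4\le d_3<8d_4$, $8d_4\le d_3$ give respectively neither, only $\bar{\alpha}$, and both as minima, hence M0, M1, M2, matching \eqref{eqn:M0FieldSpec1B} and the first lines of \eqref{eqn:M1FieldSpec1B} and \eqref{eqn:M2FieldSpec1B}; the implication $\alpha\in\mathrm{Min}\Rightarrow\bar{\alpha}\in\mathrm{Min}$ (needed so that no spurious ``only-$\alpha$'' M1 case arises) is immediate from $8>Z_+^3$. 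I expect the main obstacle to be the second-coset step: correctly re-deriving $\gamma'=\gamma$, $\bar{\gamma}'=\gamma^2$, $y'=\gamma^3$ from the canonical-divisor bookkeeping for the twisted norm $\bar{n}$, and then carrying out the $P_4$-substitution cleanly enough to expose the factor $Q_4(\gamma)$ and thereby the root $Z_+$; everything else is a routine translation of the criteria of Theorem~\ref{thm:PrincipalFactorMinima} into inequalities.
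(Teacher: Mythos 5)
Your proposal is correct and follows essentially the same route as the paper's own proof: canonical-divisor bookkeeping for both cosets, the $v=2$ criterion \eqref{eqn:Escalatory9} with $\bar{\gamma}>C_1\ge C(u_2)$ reducing everything to $\gamma/C(u_1)$ for the first coset, the exclusion of $(u_1,u_2)=(1,1)$ and the factorization $P_4(-\gamma,\gamma^3)=\gamma^2 Q_4(\gamma)$ for the second coset, and assembly via $Z_+^3<8$. The only noteworthy difference is that your orientation of the inequalities ($\alpha\notin\mathrm{Min}(\mathcal{O}_L)\Longleftrightarrow\gamma<C(u_1)$), which is the one consistent with \eqref{eqn:Escalatory9} and with the theorem's stated conclusions, is written in the paper's proof with ``$\in$'' where ``$\notin$'' is meant, so your version is the cleaner reading of the same argument.
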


%-----------------------------------------------------------------------

\begin{proof}
We begin by seeking conditions for $\alpha\in\mathrm{Min}(\mathcal{O}_{L})$.
The normalized radicals are $1<\gamma=\delta/d_4$, $1<\bar{\gamma}=\bar{\delta}$.
Their cubes are $1<\gamma^3=\frac{d_3d_4^2}{d_4^3}=\frac{d_3}{d_4}<d_3^2d_4=\bar{\gamma}^3$,
whence $\min(\gamma,\bar{\gamma})=\gamma$.
Their product is $y=\gamma\bar{\gamma}=d_3$.
The congruence invariants are $u_1\equiv d_3d_4\,(\mathrm{mod}\,3)$ and $u_2\equiv d_4\,(\mathrm{mod}\,3)$.
Thus, we have four cases according to Formula \eqref{eqn:Escalatory9} in Theorem \ref{thm:PrincipalFactorMinima}:

If $d_3\equiv d_4\equiv 1\,(\mathrm{mod}\,3)$, then $(u_1,u_2)=(1,1)$ and \\
$\alpha\in\mathrm{Min}(\mathcal{O}_{L})$ $\Longleftrightarrow$ $\gamma<C_1$ $\Longleftrightarrow$ $d_3<C_1^3\cdot d_4$.

If $d_3\equiv d_4\equiv -1\,(\mathrm{mod}\,3)$, then $(u_1,u_2)=(1,-1)$ and \\
$\alpha\in\mathrm{Min}(\mathcal{O}_{L})$ $\Longleftrightarrow$ $\gamma<C_1$ $\Longleftrightarrow$ $d_3<C_1^3\cdot d_4$,
since $\bar{\gamma}<2$ $\Longrightarrow$ $\gamma<\bar{\gamma}<2<C_1$.

If $-d_3\equiv d_4\equiv 1\,(\mathrm{mod}\,3)$, then $(u_1,u_2)=(-1,1)$ and \\
$\alpha\in\mathrm{Min}(\mathcal{O}_{L})$ $\Longleftrightarrow$ $\gamma<2$ $\Longleftrightarrow$ $d_3<2^3\cdot d_4$.
(Note that the smallest possible square-part radicand is $12=2^2\cdot 3$,
whence $\bar{\gamma}=\bar{\delta}=d_3^2d_4\ge 12>C_1>2$.)

If $d_3\equiv -d_4\equiv 1\,(\mathrm{mod}\,3)$, then $(u_1,u_2)=(-1,-1)$ and \\
$\alpha\in\mathrm{Min}(\mathcal{O}_{L})$ $\Longleftrightarrow$ $\gamma<2$ $\Longleftrightarrow$ $d_3<8\cdot d_4$.
Herewith, the first coset is done.

\noindent
We turn to the second coset. The basic assumption $d_4<d_3$ in Formula \eqref{eqn:FieldSpec1BCoarse}
is equivalent with minimality of $n=9d_4$ in the first coset and minimality of $\bar{n}=3d_4^2$ in the second coset.
However, $\alpha^2$ has norm $81d_4^2$ and thus $\bar{\alpha}=\alpha^2/3$ has norm $\bar{n}$.
The new non-trivial canonical divisors of $\bar{n}=3d_4^2=3c_5^2$ are $c_3=d_3$ (fixed) and $c_5=d_4$ (twisted).
Therefore, the new congruence invariants are $u_1\equiv c_3c_5=d_3d_4\,(\mathrm{mod}\,3)$ as before,
but $u_2\equiv 1\,(\mathrm{mod}\,3)$ is constant.
Consequently, we have only two cases according to Formula \eqref{eqn:Escalatory3} in Theorem \ref{thm:PrincipalFactorMinima},
since $(u_1,u_2)=(1,-1)$ and $(u_1,u_2)=(-1,-1)$ cannot occur:

If $d_3\equiv d_4\,(\mathrm{mod}\,3)$, then $(u_1,u_2)=(1,1)$ and
$\alpha\in\mathrm{Min}(\mathcal{O}_{L})$.

If $d_3\equiv -d_4\,(\mathrm{mod}\,3)$, then $(u_1,u_2)=(-1,1)$ and \\
$\alpha\in\mathrm{Min}(\mathcal{O}_{L})$ $\Longleftrightarrow$ $P_4(-\gamma,y)<0$ $\Longleftrightarrow$ $P_4(\bar{\gamma},y)<0$.

Now we come to a phenomenon which is very peculiar for the present situation.
The new normalized radicals are
$\gamma=\delta/c_5=\delta/d_4=\sqrt[3]{\frac{d_3d_4^2}{d_4^3}}=\sqrt[3]{\frac{d_3}{d_4}}$ as before,
but $\bar{\gamma}=\bar{\delta}/c_5=\bar{\delta}/d_4=\sqrt[3]{\frac{d_3^2d_4}{d_4^3}}=\sqrt[3]{\frac{d_3^2}{d_4^2}}=\gamma^2$,
and their product is $y=\gamma\bar{\gamma}=\frac{d_3}{d_4}=\gamma^3$.
Actual substitution into $P_4(X,Y)=X^4-X^3+X^2Y-8X^2+XY+Y^2$ yields
$P_4(-\gamma,y)=P_4(-\gamma,\gamma^3)=\gamma^4+\gamma^3+\gamma^2\gamma^3-8\gamma^2-\gamma\gamma^3+\gamma^6=\gamma^2(\gamma^4+\gamma^3+\gamma-8)$
and similarly $P_4(\bar{\gamma},y)=P_4(\gamma^2,\gamma^3)=\gamma^4(\gamma^4+\gamma^3+\gamma-8)$.

Since $\gamma\ge 1$, we obtain $P_4(-\gamma,y)<0$ $\Longleftrightarrow$ $Q_4(\gamma)=\gamma^4+\gamma^3+\gamma-8<0$
$\Longleftrightarrow$ $\gamma<Z_{+}$ $\Longleftrightarrow$ $y=\frac{d_3}{d_4}=\gamma^3<Z_{+}^3$ $\Longleftrightarrow$ $d_3<Z_{+}^3\cdot d_4$,
because the negative zero $Z_{-}$ of $Q_4(X)$ is irrelevant.
\end{proof}

%-----------------------------------------------------------------------

\begin{example}
\label{exm:SquarePartSpec1B}
We confirm six results in \cite[\S\ 6, Tbl. 2, p. 273]{HCW1982},
as reproduced in Theorem \ref{thm:WilliamsErrors},
all of species $1\mathrm{b}$, $d\equiv\pm 2,\pm 4\,(\mathrm{mod}\,9)$.
They can be treated by Theorem \ref{thm:SquarePartSpec1B}.
\begin{itemize}
\item
Let $d=833=7^2\cdot 17$ and $n=63=3^2\cdot 7$.
Then $d_3=17$, $d_4=7$,
and \eqref{eqn:FieldSpec1BCoarse} is satisfied with
$d_4=7<17=d_3$.
Further, \eqref{eqn:M0FieldSpec1B} is satisfied with
$d_3=17\equiv -d_4=-7\equiv -1\,(\mathrm{mod}\,3)$, $d_3=17<19.24\approx 2.7487\cdot 7\approx Z_{+}^3\cdot d_4$.
Therefore, $L=\mathbb{Q}(\sqrt[3]{833})$ is an $\mathrm{M}0$-field.
\item
Let $d=1\,573=11^2\cdot 13$ and $n=99=3^2\cdot 11$.
Then $d_3=13$, $d_4=11$,
and \eqref{eqn:FieldSpec1BCoarse} is satisfied with
$d_4=11<13=d_3$.
Also, \eqref{eqn:M0FieldSpec1B} is satisfied with
$d_3=13\equiv -d_4=-11\equiv 1\,(\mathrm{mod}\,3)$, $d_3=13<30.2\approx 2.7487\cdot 11\approx Z_{+}^3\cdot d_4$,
and $L=\mathbb{Q}(\sqrt[3]{1\,573})$ is an $\mathrm{M}0$-field.
\item
Let $d=4\,901=13^2\cdot 29$ and $n=117=3^2\cdot 13$.
Then $d_3=29$, $d_4=13$,
and \eqref{eqn:FieldSpec1BCoarse} is satisfied with
$d_4=13<29=d_3$.
Also, \eqref{eqn:M0FieldSpec1B} is satisfied with
$d_3=29\equiv -d_4=-13\equiv -1\,(\mathrm{mod}\,3)$, $d_3=29<35.73\approx 2.7487\cdot 13\approx Z_{+}^3\cdot d_4$,
and $L=\mathbb{Q}(\sqrt[3]{4\,901})$ is an $\mathrm{M}0$-field.
\item
Let $d=6\,358=2\cdot 11\cdot 17^2$ and $n=153=3^2\cdot 17$.
Then $d_3=22$, $d_4=17$,
and \eqref{eqn:FieldSpec1BCoarse} is satisfied with
$d_4=17<22=d_3$.
Also, \eqref{eqn:M0FieldSpec1B} is satisfied with
$d_3=22\equiv -d_4=-17\equiv 1\,(\mathrm{mod}\,3)$, $d_3=22<46.7\approx 2.7487\cdot 17\approx Z_{+}^3\cdot d_4$,
and $L=\mathbb{Q}(\sqrt[3]{6\,358})$ is an $\mathrm{M}0$-field.
\item
Let $d=8\,959=17^2\cdot 31$ and $n=153=3^2\cdot 17$.
Then $d_3=31$, $d_4=17$,
and \eqref{eqn:FieldSpec1BCoarse} is satisfied with
$d_4=17<31=d_3$.
Also, \eqref{eqn:M0FieldSpec1B} is satisfied with
$d_3=31\equiv -d_4=-17\equiv 1\,(\mathrm{mod}\,3)$, $d_3=31<46.7\approx 2.7487\cdot 17\approx Z_{+}^3\cdot d_4$.
Therefore, $L=\mathbb{Q}(\sqrt[3]{8\,959})$ is an $\mathrm{M}0$-field.
\item
Let $d=14\,801=19^2\cdot 41$ and $n=171=3^2\cdot 19$.
Then $d_3=41$, $d_4=19$,
and \eqref{eqn:FieldSpec1BCoarse} is satisfied with
$d_4=19<41=d_3$.
Also, \eqref{eqn:M0FieldSpec1B} is satisfied with
$d_3=41\equiv -d_4=-19\equiv -1\,(\mathrm{mod}\,3)$, $d_3=41<52.2\approx 2.7487\cdot 19\approx Z_{+}^3\cdot d_4$,
and $L=\mathbb{Q}(\sqrt[3]{14\,801})$ is an $\mathrm{M}0$-field.
\end{itemize}
Note that all these radicands, except $6\,358$, are of the third form in \cite[Thm. 1.1]{AMI}.
\end{example}

%-------------------------------------------------------------------------------

%\noindent
In Table \ref{tbl:JustificationSpec2},
we show for some radicands $d$ of $\mathrm{M}0$-fields
whether the proof is possible either by coarse rational integer criteria $y=\gamma\bar{\gamma}<C$ ($\checkmark$)
or only by fine multiprecision criteria $P_2(u_1\gamma,u_2\bar{\gamma})<B$ involving irrationalities, when $y\ge C$ ($\lightning$).

\renewcommand{\arraystretch}{1.0}

\begin{table}[ht]
\caption{Justifications for $\mathrm{M}0$-fields of species $2$ with coarse and fine criteria}
\label{tbl:JustificationSpec2}
\begin{center}
\begin{tabular}{|r||rrc|rr||rrc|rr|}
\hline
       $d$          & \multicolumn{5}{|c||}{first coset of $\alpha$}           & \multicolumn{5}{|c|}{second coset of $\beta$}            \\
\hline
                    &      $y$ &      $C$ &              &    $P_2$ &      $B$ &      $y$ &      $C$ &              & $P_2$ &         $B$ \\
\hline
           $1\,430$ & $1.3000$ & $2.4494$ & $\checkmark$ & $4.5812$ & $9.0000$ & $1.1818$ & $2.0000$ & $\checkmark$ & $4.6919$ & $9.0000$ \\
          $12\,673$ & $1.2608$ & $2.4494$ & $\checkmark$ & $4.5713$ & $9.0000$ & $1.5263$ & $2.0000$ & $\checkmark$ & $5.5960$ & $9.0000$ \\
          $20\,539$ & $2.0434$ & $2.4494$ & $\checkmark$ & $6.2265$ & $9.0000$ & $2.4736$ & $2.0000$ & $\lightning$ & $8.7714$ & $9.0000$ \\
          $33\,337$ & $2.1764$ & $2.0000$ & $\lightning$ & $8.8258$ & $9.0000$ & $3.1176$ & $2.4494$ & $\lightning$ & $7.7183$ & $9.0000$ \\
          $52\,417$ & $2.3043$ & $2.4494$ & $\checkmark$ & $6.3921$ & $9.0000$ & $1.8695$ & $2.0000$ & $\checkmark$ & $7.3155$ & $9.0000$ \\
\hline
\end{tabular}
\end{center}
\end{table}

%\newpage

%-----------------------------------------------------------------------

%-----------------------------------------------------------------------

\section{Conclusion}
\label{s:Conclusion}

\noindent
In our previous work \cite{AMI}, have characterized in all Kummer extensions $\mathit{k}/\mathit{k}_0$,
which possess a relative $3$-genus field $\mathit{k}^{\ast}$ with elementary bicyclic Galois group $\operatorname{Gal}(\mathit{k}^{\ast}/\mathit{k})$. The underlying pure cubic subfields $L=\mathbb{Q}(\sqrt[3]{d})$ partially reveal
the rare behavior that none of the generators of primitive ambiguous principal ideals
occurs among the lattice minima of the maximal order $\mathcal{O}_{L}$.
We have given necessary and sufficient conditions for these exotic fields.
Since their existence has an unpleasant impact on the classification of pure cubic fields $L$
by means of Voronoi's algorithm, we have developed and implemented a marvellous technique
for unambiguously determining the principal factorization type of $L$,
thereby correcting serious defects in earlier tables.

%-----------------------------------------------------------------------

\section{Acknowledgements}
\label{s:Thanks}

\noindent
This paper is respectfully dedicated to Professor H. C. Williams
% , born 23 July 1943,
on the occasion of his 75th birthday.

The fourth author gratefully acknowledges that his research was supported by the
Austrian Science Fund (FWF): P 26008-N25.

%-----------------------------------------------------------------------

%-----------------------------------------------------------------------
\begin{quote}

Abdelmalek AZIZI, Moulay Chrif ISMAILI and Siham AOUISSI  \\
Department of Mathematics and Computer Sciences, \\
Mohammed first University, \\
60000 Oujda - Morocco, \\
abdelmalekazizi@yahoo.fr, mcismaili@yahoo.fr, aouissi.siham@gmail.com. \\

Daniel C. MAYER \\
Naglergasse 53, 8010 Graz, Austria, \\
algebraic.number.theory@algebra.at \\
URL: http://www.algebra.at. \\

Mohamed TALBI \\
Regional Center of Professions of Education and Training, \\
60000 Oujda - Morocco, \\
ksirat1971@gmail.com.
\end{quote}
%--------------------------------------------------------------------------------

\end{document}